\newtheorem{assumption}{Assumption}
 \newtheorem{remark}{Remark}
\newtheorem{theorem}{Theorem}
\newtheorem{proposition}{Proposition}
\newtheorem{corollary}{Corollary}
\newtheorem{lemma}{Lemma}
\def\argmin{\mathop{\rm arg\, min}}
\def\B{{\mathcal B}}
\def\P{{\mathcal P}}
\newcommand{\Zplus}{\mathbb{Z}_+}
\begin{document}

\sloppy
\title{Approximate Q-Learning for Controlled Diffusion Processes and its Near Optimality\thanks{
E. Bayraktar is partially supported by the National Science Foundation under grant DMS-2106556 and by the Susan M. Smith chair.}
}
\author{Erhan Bayraktar and Ali Devran Kara
\thanks{The authors are with the Department of Mathematics,
     University of Michigan, Ann Arbor, MI, USA,
     Email: \{erhan,alikara\}@umich.edu}
     }
\maketitle

\maketitle
\begin{abstract}
We study a Q learning algorithm for continuous time stochastic control problems. The proposed algorithm uses the sampled state process by discretizing the state and control action spaces under piece-wise constant control processes. We show that the algorithm converges to the optimality equation of a finite Markov decision process (MDP). Using this MDP model, we provide an upper bound for the approximation error for the optimal value function of the continuous time control problem. Furthermore, we present provable upper-bounds for the performance loss of the learned control process compared to the optimal admissible control process of the original problem. The provided error upper-bounds are functions of the time and space discretization parameters, and they reveal the effect of different levels of the approximation: (i) approximation of the continuous time control problem by an MDP, (ii) use of piece-wise constant control processes, (iii) space discretization. Finally, we state a time complexity bound for the proposed algorithm as a function of the time and space discretization parameters.
\end{abstract}

\tableofcontents
\section{Introduction}
The goal of this paper is to develop a convergent learning algorithm for controlled diffusion processes when the decision maker only has access to the state process and the cost realizations, and establish error bounds for the performance of the learned control process compared to the optimal admissible control as a function of the algorithm parameters. 
\subsection{Preliminaries}
We start with the setup of the paper by defining the dynamics of the control problem.
The objective is to study a controlled diffusion process, $X(\cdot)$, given by the following stochastic differential equation
\begin{align}\label{diff}
X(t)=X_0+\int_0^{t}b(X(s),u(s))ds+\int_0^t\sigma(X(s),u(s))dB(s)
\end{align}
for $t \geq 0$ where $X(t)\in \mathds{X}$, and $\mathds{X}\subset \mathds{R}$. $B(\cdot)$ is the driving noise which is assumed to be a Wiener process, and  $u(\cdot)\in\mathds{U}$ is the control process with measurable paths. We assume that the control is non-anticipative such that for $0\leq s <t$, the noise increments $B(t)-B(s)$, are independent of $B(y),u(y)$ for $y\leq s$. We note that the results presented in this paper will be applicable for multidimensional spaces as well, however, we will assume that everything is one-dimensional for ease of notation.

We will later put assumptions on the model, which will guarantee the existence of strong solutions to the diffusion process (\ref{diff}), under admissible control processes (Assumption \ref{diff_assmp}).

The objective of the controller is to minimize the following infinite horizon discounted cost function
\begin{align}\label{cost}
W_\beta(x_0,u)=E\left[\int_0^\infty e^{-\beta s}c\left(X(s),u(s)\right)ds\right]
\end{align}
where the expectation is with respect to the initial point $X(0)=x_0$ and the given control process $u$, the stage-wise cost function is denoted by $c:\mathds{X}\times \mathds{U}\to \mathds{R}$, and $\beta$ is some discount factor. 

The optimal cost is denoted by
\begin{align*}
W_\beta^*(x_0):=\inf_u W_\beta(x_0,u)
\end{align*}
where the infimum is taken over all admissible control processes.

\subsection{Problem Formulation and a Proposed Algorithm}
Our goal is to provide a reinforcement learning algorithm which converges to an approximately optimal control policy under certain assumptions. In particular, we will show that if the standard Q learning algorithm (\cite{Watkins,TsitsiklisQLearning,jaakkola1994convergence}) is used for the state process after time and space discretization, we can find sufficient conditions for the algorithm to converge. Furthermore, we will provide provable error bounds for the performance of the policies learned through these iterations. We note that the standard Q learning algorithm is not readily applicable for the diffusion model we consider in the paper. The classical Q learning algorithm is designed for finite Markov chains, where the Markov property of the data is crucially used to prove the convergence of the algorithm. In the model we consider here, time discretization breaks the Markov property of the induced chain unless the control policies are selected carefully, namely selected as piece-wise constant functions. Furthermore, even when the sampled process which lives on the original continuous state space is Markov, the process constructed using space discretization will not be a Markov chain. Hence, one needs to alter the convergence proof significantly when the space is discretized, in order to show the classical Q learning algorithm converges under time and space discretization.

We assume that the drift, diffusion, and the cost functions $b,\sigma$ and $c$ are not known. By observing the state process $X(t)$ and the cost realizations $c(X(t),u(t))$, we try to learn the Q values of a finite Markov decision process (MDP), which will be shown to well approximate the original problem. Furthermore, we keep track of the state and the cost processes via sampling the time and discretizing $\mathds{X}$ and $\mathds{U}$ spaces. To this end, we fix a sampling interval $h>0$, and finite subsets $\mathds{X}_h\subset \mathds{X}$,  $\mathds{U}_h\subset \mathds{U}$. We put the $h$ dependence on the finite spaces to recover the cases where the space discretization rate depends on the time discretization rate. We also define the mapping  $\phi_\mathds{X}:\mathds{X}\to\mathds{X}_h$, to map the original value of the state variables to the discrete set $\mathds{X}_h$, e.g. a natural choice would be a nearest neighbour map. 

For the exploration phase, we use a piecewise constant control process, such that at $i\times h$, for $i=0,1,\dots$, some $\hat{u}\in\mathds{U}_h$ is chosen according to the exploration policy and is applied for the time interval $[i\times h,(i+1)\times h)$. Hence, the resulting exploration process $\hat{u}(t)$ is a piecewise constant process. 

After we fix the exploration control process $\hat{u}(t)$, we observe the controlled Markov chain process $X_n$ 
\begin{align*}
X_n:=X(n\times h),\quad \text{ for } n=0,1,2\dots.
\end{align*}
Note that $X_n$ takes values from a continuous set $\mathds{X}$. We further map these values to the finite set $\mathds{X}_h$, to construct the stochastic process $\hat{X}_n$, which is defined as
\begin{align}\label{chain}
\hat{X}_n:=\phi_{\mathds{X}}\left(X_n\right)
\end{align}
The process $\hat{X}_n$ is the discretized version of the sampled diffusion process, and hence it takes values from $\mathds{X}_h$. We note that the learning algorithm will be constructed using the process $\{\hat{X}_n\}_{n}$.

We now present the proposed algorithm formally as follows:
\begin{enumerate}
\item Chose a sampling interval $h>0$.
\item Choose finite subsets $\mathds{X}_h\subset \mathds{X}$ and $\mathds{U}_h\subset\mathds{U}$.
\item Define a mapping $\phi_\mathds{X}:\mathds{X}\to\mathds{X}_h$ (e.g. a nearest neighbour map).
\item Select a $\mathds{U}_h$ valued piecewise constant exploration process $\hat{u}(t)$.
\item Observe the discrete process $\hat{X}_n$ (see (\ref{chain})), and for all $(\hat{x},\hat{u})\in\mathds{X}_h\times\mathds{U}_h$, initiate the Q values by choosing $Q_0(\hat{x},\hat{u})$, choose the learning rates $\alpha_k(\hat{x},\hat{u})$ (see Assumption \ref{learning_assmp}), using the cost realization $c(X(n\times h),\hat{u})$ update
\begin{align}\label{QPOMDP}
Q_{k+1}(\hat{x},\hat{u})=(1-\alpha_k&(\hat{x},\hat{u}))Q_k(\hat{x},\hat{u})\nonumber\\
+&\alpha_k(\hat{x},\hat{u})\left(c(X(k\times h),\hat{u})\times h+\beta_h \min_{v\in\mathds{U}_h} Q_k\Big(\hat{X}_{k+1},v\Big)\right)
\end{align}
where $\beta_h=e^{-\beta\times h}$, and $\hat{X}_{k+1}$ is the sampled state we observe following $\hat{X}_k=\hat{x}$.


\item In Section \ref{q_conv}, we will prove that under suitable assumptions, the iterations $Q_k:\mathds{X}_h\times\mathds{U}_h\to \mathds{R}$, converge almost surely to some $Q^*:\mathds{X}_h\times\mathds{U}_h\to \mathds{R}$. In Section \ref{MC_section}, we will show that limit values are the $Q$-values of some finite controlled Markov chain. Using $Q^*$, we define a mapping $\gamma_h:\mathds{X}_h\to \mathds{U}_h$ such that for any $\hat{x}\in\mathds{X}_h$
\begin{align*}
\gamma_h(\hat{x})=\argmin_{\hat{u}\in\mathds{U}_h}Q^*(\hat{x},\hat{u}).
\end{align*}

\item Define the control process $u_h(t)$ such that
\begin{align*}
u_h(t)=\gamma_h\left(\phi_\mathds{X}(X(i\times h)\right), \text{ for } t\in [i\times h, (i+1)\times h),
\end{align*}
that is $u_h$ is a piece-wise constant process, which changes value at the sampling instances according to the learned map $\gamma_h$.
\end{enumerate}
We are then interested in the following question:

\textbf{Problem:} If we use the learned control $u_h$ for the continuous time model, what do we lose compared the optimal admissible control process. In other words, we are interested in the difference
\begin{align}\label{error}
W_\beta(x_0,u_h)-W_\beta^*(x_0).
\end{align}
We will try to bound this term in terms of the sampling interval $h$ and the state and control action spaces' discretization errors $L_{\mathds{X}}$ and $L_{\mathds{U}}$ that are defined as 
\begin{align*}
L_{\mathds{X}}:=&\sup_{x\in\mathds{X}}\left|x-\phi_\mathds{X}(x)\right|,\\
L_{\mathds{U}}:=&\sup_{u\in\mathds{U}}\min_{u_h\in\mathds{U}_h}\left|u-u_h\right|.
\end{align*}

We will also analyze the asymptotic case, that is, we will show that under certain assumptions 
\begin{align*}
W_\beta(x_0,u_h)-W_\beta^*(x_0)\to 0
\end{align*}
as $h, L_{\mathds{X}},L_{\mathds{U}}\to 0$.

To answer these questions, we will show that the iterations in (\ref{QPOMDP}) converge to the optimal Q values of some finite Markov chain which approximates the value function of the diffusion process in (\ref{diff}) with the cost function (\ref{cost}).

\subsection{Literature Review}
The results and the techniques used in the paper are related to reinforcement learning algorithms for continuous and discrete time control problems, finite time-space approximations of diffusion processes, and approximation methods for Markov decision processes. In the following, we summarize the related works in the literature by their main objectives:

\textbf{Reinforcement learning for discrete-time stochastic control problems} Optimal control of stochastic ( or deterministic) dynamical systems typically requires a perfect knowledge about the system components, however, the correct underlying model of the control problem is usually unknown or too complicated to work with. The objective of reinforcement learning is to estimate or learn the relevant information about the problem such as the value function or the optimal policy by interacting and observing the system. Majority of the literature, however, deals with discrete-time and sequential decision making problems. A popular reinforcement learning algorithm, called Q learning (\cite{Watkins}), for example, is proposed to learn the Q factors, which is closely related to the value function, for discrete time and space problems. Q learning is guaranteed to converge under mild assumptions for discrete settings without any information on the transition models or the cost function of the problem by observing the state and cost realizations (\cite{TsitsiklisQLearning,jaakkola1994convergence}). 

Even though Q learning is simple to implement and requires minimal knowledge about the system, it is not effective for large or continuous state and action spaces. To overcome the dimension challenges, one can try to learn an approximate version of the problem. The approximation can be done through various methods:  One can use function approximations for the optimal value function (see \cite{CsabaAlgorithms,tsitsiklis1997analysis}). To approximate the value functions, neural networks, state aggregation, or linear approximation techniques with finitely many linearly independent basis functions can be used. For state aggregation and linear approximation methods, convergence can be shown, however,  a rigorous error analysis is  usually not available. Some related work includes \cite{singh1995reinforcement,melo2008analysis,gaskett1999q,CsabaSmart,NNQlearning} and references therein. For the aforementioned works, typically, either a careful parametrization of the value functions or strong density assumptions on the transition kernels are required. However, for general continuous time problems, these assumptions might be too strong. For example, the sampled continuous time stochastic controlled process we will study in this paper, can only be shown to have weakly continuous transition dynamics (see Lemma \ref{kernel_x} and Lemma \ref{kernel_u}). In \cite{kara2021qlearning}, it is shown that, by choosing a finite subset of the action space and discretizing the state space, one can consistently learn nearly optimal control policies for systems with weakly continuous transition models.

\textbf{Approximations for continuous time control problems} Space approximation methods allow one to use learning algorithms for control problems with large state spaces, however, these methods only work for discrete time decision making problems. Control problems in continuous time, in general, are not feasible to work with numerically. Due to complex nature of the problems, solutions of the control process may not be available, which results in approximation attempts on the time domain, either through the state process or the optimality equation. \cite{kushner2001numerical} provides a general Markov chain approximation method for the controlled diffusion processes, by directly approximating the continuous time state process with a finite space controlled Markov chain. It is shown that under certain 'consistency' conditions, state process and value function approximations are asymptotically close to the solutions of the continuous time process. \cite{Krylov97,Krylov00} establish convergence rates for such approximation methods. Improved convergence rates for finite approximation methods are later presented in \cite{Jacobsen02,Jacobsen05}. We note that, these works study the approximation of an available model and do not focus on learning an approximate model when the dynamics are not available to the decision maker.

\textbf{Reinforcement learning in continuous time} Learning and planing in continuous time problems become much more challenging in continuous time mainly because of the complex dynamics and optimality equation of the problems.

 We first note that our main objective and contribution is to rigorously prove convergence of the discretization based model-free learning algorithms for general diffusion processes and to provide provable error bounds that clearly indicate the effect of space and time discretization.

A large number of papers dealing with learning of continuous time control problems (learning of value functions or control policies), considers linear dynamics and cost setting, and develop algorithms and theoretical results using the structural properties of this limited setting, see e.g. \cite{palanisamy2014continuous,bian2016value,vamvoudakis2017q,rajagopal2016neural,kontoudis2019kinodynamic,wang2020reinforcement,wang2020continuous} for some of the related papers that work with linear models. In our paper, we consider general non-linear controlled diffusion dynamics, where the only assumption we put on the dynamics is a standard continuity assumption, which is  required for existence results.

Another large set of studies in  learning of continuous time control problems consider deterministic dynamics see e.g. \cite{tassa2007least,yang2017hamiltonian,lutter2020hjb,kontoudis2019kinodynamic,lee2021policy,yildiz2021continuous,munos2006policy}. We note that the approximation, convergence and structural analysis are considerably more challenging for continuous time {\it{stochastic}} control problems. We further note that some of these studies focus on approximate solutions to the optimal control problem with an available model, instead of considering model-free learning methods.

Another related direction includes estimation and learning of dynamics for stochastic differential equations see e.g. \cite{yildiz2018learning, batz2018approximate, pereira2010learning, liao2019learning}. In these papers, the focus is on the learning of models for control-free stochastic processes governed by stochastic differential equations (SDEs), which differs from our objective of learning optimal value functions and-or near optimal policies in controlled stochastic processes.

Learning methods are also used to find (approximate) solutions to partial differential equations and to solve HJB equations (approximately) see e.g. \cite{tassa2007least,abu2005nearly, germain2021neural, lefebvre2022differential}. Note that these works approach to the solution problem with a model in hand ( i.e. model-based not model-free), where the focus is on finding the solution of the PDEs and HJB equations which is by itself a challenging problem.

The closest papers to ours are \cite{tallec2019making,baird1994reinforcement,munos1997reinforcement}. \cite{munos1997reinforcement} proposes a learning method using time and space discretizations for a general controlled diffusion process, and proves that the learned approximations are asymptotically optimal as the discretization parameters converge to 0. In our paper, we present error bounds in terms of the space and time discretization rate, which explicitly shows the effects of discretization. \cite{tallec2019making,baird1994reinforcement} proposes `advantage functions' for continuous time learning algorithms building on the observation that Q values are no longer informative for continuous time settings, and this observation is also widely used in the literature. In our results, we show that this claim should be approached with care. We show that as long as the space discretizaton rate is at least as high as the time discretization rate, Q values of the approximate MDP model well approximate the value function of the controlled diffusion problem.

\textbf{Our contributions} In this paper, we will study an approximate Q learning algorithm for general continuous time stochastic control problems by discretizing the time, and state and action spaces. The discretization in time will assumed to be uniform, however, state and action spaces can be discretized in a non-uniform way. We will show that the algorithm will converge under mild assumptions, even when the space quantization is non-uniform. Different from earlier works, we will then provide error bounds and convergence rates in terms of the discretization parameters. The bounds will suggest that, even though the algorithm converges for general space quantization, the performance of the learned value functions and policies will depend on the quantization scheme. Furthermore, provided error bounds will emphasize the effects of different levels of the approximation such as estimating the diffusion process with a controlled Markov chain, use of piece-wise constant policies, and state and action space quantization. Lastly, we will also discuss the effect of discretization parameters on the learning speed of the algorithm.


\subsection{Outline of the Paper}
In Section \ref{MC_section}, we construct a finite space Markov decision process (MDP) that will serve to approximate the diffusion process (\ref{diff}). In particular, in Section \ref{contMDP}, we present a controlled discrete time control process, which has the same distributions as the diffusion process at the sampling instances, when the diffusion process is controlled using piece-wise constant control functions. In Section \ref{finiteMDP}, we present a finite space controlled Markov chain, that is constructed based on the MDP from Section \ref{contMDP}, using state aggregation methods. 

In Section \ref{apprx}, we analyze the differences between the value function of the diffusion process (\ref{diff}), and the value function of the finite space MDP constructed in Section \ref{finiteMDP}. Furthermore, we provide upper bounds for the error (or regret) of the control policy designed for the finite space MDP, when it is used to control the diffusion process, in terms of the approximation parameters; where the comparison is with respect to the performance of the optimal admissible control process.

 In Section \ref{q_conv}, we show that the algorithm (\ref{QPOMDP}) converges, under certain assumptions, to the Q values of the finite space MDP constructed in Section \ref{finiteMDP}, and thus, we provide error bounds for the learned policy using the results from Section \ref{apprx}.

Finally, in Section \ref{sample}, towards a practical purpose, we analyze the convergence rate of the algorithm (\ref{QPOMDP}), with respect to the sampling interval lengths, $h$, and the number of iterations where we provide time complexity bounds.

\section{Approximate Markov Chains}\label{MC_section}
In this section, we provide two controlled Markov chains which will help us to analyze the error term (\ref{error}).

\subsection{A Markov Chain Construction with Exact Approximation of the Diffusion Process}\label{contMDP}
The first Markov chain we will present will have the same finite dimensional distributions with the sampled diffusion process under piece-wise constant control processes.

Let $\mathds{X}$ be the state space, and $\mathds{U}_h$ be the control action space of the Markov chain. 

We define the transition probabilities as follows: For any $k\in\Zplus$,  distribution of the state $X_k$ conditioned on the past state and action variables, is determined by the diffusion process (\ref{diff}), such that, conditioned on $(x_{k-1},\dots,x_0,u_{k-1},\dots,u_0)$, $X_k$ has the same distribution as 
\begin{align}\label{sampled_MDP}
X(h)=x_{k-1}+\int_0^{h}b(X(s),u_{k-1})ds+\int_0^h\sigma(X(s),u_{k-1})dB(s).
\end{align} 
Hence, for any $A\in\B(\mathds{X})$
\begin{align*}
Pr(X_k\in A|x_{[0,k-1]},u_{[0,k-1]})=\mathcal{T}_h(A|x_{k-1},u_{k-1})
\end{align*}
where ${(x,u)}_{[0,k-1]}:=x_0,\dots,x_{k-1},u_0,\dots,u_{k-1}$, such that 
\begin{align*}
\mathcal{T}_h(dx_k|x_{k-1},u_{k-1})\sim X(h)
\end{align*}
where $X(h)$ determined by (\ref{sampled_MDP}) and where  $\mathcal{T}_h$ is the transition kernel of the Markov chain which is a stochastic kernel from $\mathds{X}\times
\mathds{U}_h$ to $\mathds{X}$.

We also define a stage-wise cost function $c_h:\mathds{X}\times\mathds{U}_h\to \mathds{R}_+$ such that for any $(x,u)\in\mathds{X}\times\mathds{U}_h$
\begin{align*}
c_h(x,u):=c(x,u)\times h
\end{align*} where $c$ is the cost function of the diffusion process (see (\ref{cost})).

We now define the infinite horizon discounted cost function
\begin{align}\label{MDP1_cost}
  J_{\beta_h}(x_0,\gamma):= E_{x_0}^{\mathcal{T}_h,\gamma}\left[\sum_{k=0}^{\infty} \beta_h^k c_h(X_k,U_k)\right]
\end{align}
where $\beta_h:=e^{-\beta\times h}$, and $\gamma$ is an {\em admissible policy}. An {\em admissible policy} is a 
sequence of control functions $\{\gamma_k,\, k\in \Zplus\}$ such
that $\gamma_k$ is measurable with respect to the $\sigma$-algebra
generated by the information variables
$
I_k=\{X_{[0,k]},U_{[0,k-1]}\}, \quad k \in \mathds{N}, \quad
  \quad I_0=\{Y_0\},
$
where
\begin{equation}
\label{eq_control}
U_k=\gamma_k(I_k),\quad k\in \Zplus,\nonumber
\end{equation}
are the $\mathds{U}_h$-valued control actions.
\noindent We define $\Gamma$ to be the set of all such admissible policies. The optimal cost function is defined as
\begin{align}\label{MDP1_optcost}
J_{\beta_h}^*(x_0):= J_{\beta_h}(x_0,\gamma).
\end{align}

\begin{remark}
An important property of the MDP model we will make use of, is the following one: suppose that we are given an admissible policy $\gamma\in\Gamma$ defined for the MDP, we define the following control process $u(t)$ such that
\begin{align}\label{pccontrol}
u(t)=\gamma(X(k\times h)), \text{ for } t\in[k\times h, (k+1)\times h)
\end{align}
which is a piece-wise constant control process. Then, the controlled Markov chain state process $X_k$ under the policy $\gamma$ and the controlled diffusion process  $X(t)$ under the control process defined in (\ref{pccontrol}) have the same distributions at the sampling instances if they start from the same initial points, that is, for any $k\in \Zplus$, $$X_k\sim X(k\times h).$$
\end{remark}
\subsection{Finite State MDP Construction by Discretization of the State Space}\label{finiteMDP}

We now construct an MDP with a finite state space by dicretizing the state space $\mathds{X}$. 

We start by choosing a collection of disjoint sets $\{B_i\}_{i=1}^M$ such that $\cup_i B_i=\mathds{X}$, and $B_i\cap B_j =\emptyset$ for any $i\neq j$. Furthermore, we choose a representative state, $\hat{x}_i\in B_i$, for each disjoint set. For this setting, we denote the new finite state space by
$\mathds{X}_h:=\{\hat{x}_1,\dots,\hat{x}_M\}$. We put the dependence on the parameter $h$, since we will let the size of the finite set, $M$ go to $\infty$ as $h\to 0$. The mapping from the original state space $\mathds{X}$ to the finite set $\mathds{X}_h$ is done via
\begin{align}\label{quant_map}
\phi_{\mathds{X}}(x)=\hat{x}_i \quad \text{ if } x\in B_i.
\end{align}
Furthermore, we choose a weight measure $\pi^*(\cdot)\in\P(\mathds{X})$ on $\mathds{X}$ such that $\pi^*(B_i)>0$ for all $i\in\{1,\dots,M\}$. We now define normalized measures using the weight measure on each separate quantization bin $B_i$ such that  
\begin{align}\label{norm_inv}
\hat{\pi}_{\hat{x}_i}^*(A):=\frac{\pi^*(A)}{\pi^*(B_i)}, \quad \forall A\subset B_i, \quad \forall i\in \{1,\dots,M\}
\end{align}
 that is $\hat{\pi}_{\hat{x}_i}^*$ is the normalized weight measure on the set $B_i$, $\hat{x}_i$ belongs to. 

We now define the cost and transition kernels for this finite set using the normalized weight measures such that for any $\hat{x}_i,\hat{x}_j\in \mathds{X}_h$
\begin{align}\label{finite_cost}
&C_h^*(\hat{x}_i,u)=\int_{B_i}c_h(x,u)\hat{\pi}_{\hat{x}_i}^*(dx)\nonumber\\
&P_h^*(\hat{x}_j|\hat{x}_i,u)=\int_{B_i}\mathcal{T}_h(B_j|x,u)\hat{\pi}_{\hat{x}_i}^*(dx).
\end{align}
where $\mathcal{T}_h$ is the transition model for the MDP constructed in Section \ref{contMDP}.

Having defined the finite state space $\mathds{X}_h$, the cost function $C_h^*$ and the transition model $P_h^*$, we can now introduce the optimal value function for this finite model. We denote the optimal value function, which is defined on $\mathds{X}_h$, by $\hat{J}_{\beta_h}:\mathds{X}_h\to \mathds{R}$. Note that $\hat{J}_{\beta_h}$ satisfies the following Bellman equation for any $\hat{x}_i\in\mathds{X}_h$:
\begin{align}\label{finitepolicy}
\hat{J}_{\beta_h}(\hat{x}_i)=\inf_{u\in\mathds{U}_h}\left\{C_h^*(\hat{x}_i,u)+\beta_h\sum_{\hat{x}_1\in\mathds{X}_h}\hat{J}_{\beta_h}(\hat{x}_1)P_h^*(\hat{x}_1|\hat{x}_i,u)\right\}
\end{align}
We can easily extend this function over the state space $\mathds{X}$ by making it constant over the quantization bins. In other words, if $\hat{x}_i\in B_i$, where $\cup_i B_i=\mathds{X}$, for any $x\in B_i$, we write
\begin{align*}
\hat{J}_{\beta_h}(x):=\hat{J}_{\beta_h}(\hat{x}_i).
\end{align*}
Furthermore, the following equation follows directly from the dynamic programming principle for the Q values of the finite MDP:
\begin{align*}
Q_h^*(\hat{x}_i,u)=C^*_h(\hat{x}_i,u)+\beta_h\sum_{\hat{x}_1}\min_vQ_h^*(\hat{x}_1,v) P^*_h(\hat{x}_1|\hat{x}_i,u).
\end{align*}

\begin{remark}
We will prove that the iterations (\ref{QPOMDP}) converge to some $Q^*$ which satisfies the above equation, under suitable conditions on the diffusion process.
\end{remark}

We further define uniform error bounds resulting from the discretization of the state and actions spaces such that
\begin{align*}
L_{\mathds{X}}:=&\sup_{x\in\mathds{X}}\left|x-\phi_\mathds{X}(x)\right|,\\
L_{\mathds{U}}:=&\sup_{u\in\mathds{U}}\min_{u_h\in\mathds{U}_h}\left|u-u_h\right|.
\end{align*}
We note that later in the paper (see Corollary \ref{cor1}) we will see that, for the approximation error of the diffusion process to go to $0$ with increasing discretization rates, we will need $L_\mathds{X},L_{\mathds{U}}<h$.

\section{Approximation of the Diffusion Process by a Finite MDP}\label{apprx}

Recall that we are interested in the term
\begin{align*}
W_\beta(x_0,u_h)-W_\beta^*(x_0)
\end{align*}
where $u_h$ the control process obtained with $\gamma_h$ that is learned via $(\ref{QPOMDP})$. The first term represents the cost induced by the application of the control designed for the finite space MDP, when it is used for the continuous time model. The second term represents the optimal cost for the continuous time model. In Section \ref{q_conv}, we will prove that the learned policy $\gamma_h$ is optimal for the MDP defined in Section \ref{finiteMDP}. Hence, in this section, we will assume that $\gamma_h$ solves  (\ref{finitepolicy}).

We write the following:
\begin{align}
W_\beta(x_0,u_h)-W_\beta^*(x_0)&=W_\beta(x_0,u_h)-J_{\beta_h}(x_0,\gamma_h)\label{term1}\\
&\quad+J_{\beta_h}(x_0,\gamma_h)-J_{\beta_h}^*(x_0)\label{term2}\\
&\quad+J_{\beta_h}^*(x_0)-W_{\beta}^*(x_0).\label{term3}
\end{align}
In what follows, we will analyze each term separately.

\subsection{Analysis of term (\ref{term1})}
The following result provides a bound for the difference between the value function of a diffusion process controlled with piece-wise constant control processes and  the value function of an MDP controlled with an admissible policy.
\begin{assumption}\label{diff_assmp}
For the diffusion process given in (\ref{diff}), we assume that
\begin{itemize}
\item $\sup_{x,u}|b(x,u)|\leq B$, and $\sup_{x,u}|\sigma(x,u)|\leq B$, for some $B<\infty$.
\item $|b(x,u)-b(x',u')|+|\sigma(x,u)-\sigma(x',u')|\leq K\left(|x-x'|+|u-u'|\right)$ for some $K<\infty$ and for any $x,x'\in\mathds{X}$, and $u,u'\in\mathds{U}$.
\item $\sup_{x,u}|c(x,u)|\leq C$  for some $C<\infty$.
\item $|c(x,u)-c(x',u')|\leq K\left(|x-x'|+|u-u'|\right)$ for some $K<\infty$ and for any $x,x'\in\mathds{X}$, and $u,u'\in\mathds{U}$.
\item The process is nondegenerate such that $\sigma^2(x,u)>0$ for every $x,u\in\mathds{X}\times\mathds{U}$.

\end{itemize}
\end{assumption}
Note that these assumptions are sufficient for the existence of a unique strong solution to (\ref{diff}) under admissible control processes.

\begin{proposition}\label{time_app_thm}
Let $\gamma\in\Gamma$ be an admissible policy for the sampled controlled Markov chain, and $u_h(\cdot)$ be the corresponding piece-wise constant control process for the diffusion process such that
\begin{align*}
u_h(t)=\gamma(x(i\times h)), \text{ for } t\in[i\times h,(i+1)\times h].
\end{align*}
Under Assumption \ref{diff_assmp}, we have that for any $x_0\in\mathds{X}$
\begin{align*}
\left|W_\beta(x_0,u_h)-J_{\beta_h}(x_0,\gamma)\right|\leq hB+\frac{K B h}{1-e^{-\beta h}}\left(h+\sqrt{\frac{2h}{\pi}}\right).
\end{align*}
\end{proposition}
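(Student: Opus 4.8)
The plan is to reduce everything to a per-sampling-interval estimate by using the distributional identification recorded in the Remark of Section~\ref{contMDP}: because $\gamma$ is admissible and $u_h$ is the piece-wise constant control built from it, the sampled diffusion $\{X(kh)\}_{k\ge0}$ and the Markov chain $\{X_k\}_{k\ge0}$ have the same joint law together with the actions actually applied (write $U_k$ for the common action), this following by induction on $k$ straight from the definition of $\mathcal{T}_h$ as the law of $X(h)$ under a control frozen on $[0,h]$, which is well posed since Assumption~\ref{diff_assmp} yields a unique strong solution of (\ref{diff}) under admissible controls. Hence I would write
\begin{align*}
W_\beta(x_0,u_h)=\sum_{k\ge0}E\left[\int_{kh}^{(k+1)h}e^{-\beta s}c(X(s),U_k)\,ds\right],\qquad J_{\beta_h}(x_0,\gamma)=\sum_{k\ge0}e^{-\beta kh}h\,E\left[c(X(kh),U_k)\right],
\end{align*}
the interchange of sum and expectation being legitimate because $|c|\le C$ and $\beta_h<1$, so that it suffices to bound $\sum_k|\Delta_k|$ with $\Delta_k:=E\big[\int_{kh}^{(k+1)h}e^{-\beta s}c(X(s),U_k)\,ds\big]-e^{-\beta kh}h\,E[c(X(kh),U_k)]$.

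Next I would split $\Delta_k=\Delta_k^{(1)}+\Delta_k^{(2)}$, where $\Delta_k^{(1)}=E\big[\int_{kh}^{(k+1)h}e^{-\beta s}\big(c(X(s),U_k)-c(X(kh),U_k)\big)\,ds\big]$ isolates the motion of the state inside a quantization interval, and $\Delta_k^{(2)}=E[c(X(kh),U_k)]\big(\int_{kh}^{(k+1)h}e^{-\beta s}\,ds-e^{-\beta kh}h\big)$ is the pure discounting mismatch. For $\Delta_k^{(2)}$ the bound $|c|\le C$ together with the elementary inequality $\beta h\,e^{-\beta h}\le 1-e^{-\beta h}$ (i.e.\ $e^x\ge1+x$) gives, after summing the geometric series $\sum_k e^{-\beta kh}=(1-e^{-\beta h})^{-1}$,
\begin{align*}
\sum_k|\Delta_k^{(2)}|\le\frac{C}{1-e^{-\beta h}}\left(h-\frac{1-e^{-\beta h}}{\beta}\right)\le Ch,
\end{align*}
which accounts for the first, order-$h$ term of the claimed bound. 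For $\Delta_k^{(1)}$ I would use $e^{-\beta s}\le e^{-\beta kh}$ on $[kh,(k+1)h]$, the Lipschitz estimate $|c(x,u)-c(x',u)|\le K|x-x'|$ from Assumption~\ref{diff_assmp}, and the key displacement bound
\begin{align*}
E\big|X(s)-X(kh)\big|\le B(s-kh)+B\sqrt{\frac{2(s-kh)}{\pi}},\qquad s\in[kh,(k+1)h],
\end{align*}
uniformly in $k$, so that $|\Delta_k^{(1)}|\le e^{-\beta kh}Kh\big(Bh+B\sqrt{2h/\pi}\big)$; summing over $k$ produces exactly $\frac{KBh}{1-e^{-\beta h}}\big(h+\sqrt{2h/\pi}\big)$.

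The heart of the argument is thus the displacement bound. On $[kh,(k+1)h]$ the control is frozen at $U_k$, so $X(s)-X(kh)=\int_{kh}^s b(X(r),U_k)\,dr+\int_{kh}^s\sigma(X(r),U_k)\,dB(r)$; the drift term is at most $B(s-kh)$ in absolute value by $\sup|b|\le B$, and for the martingale part $M_\cdot:=\int_{kh}^\cdot\sigma\,dB$ I would use $\sup|\sigma|\le B$ to bound its quadratic variation by $B^2(s-kh)$. A crude Cauchy--Schwarz/It\^o-isometry estimate already gives $E[|M_s|\mid\mathcal{F}_{kh}]\le B\sqrt{s-kh}$, hence a slightly weaker but still valid bound; to obtain the sharp constant $\sqrt{2/\pi}$ I would instead time-change $M$ (Dambis--Dubins--Schwarz) and use optional stopping to represent $M_s$, conditionally on $\mathcal{F}_{kh}$, as a conditional expectation of a single centred Gaussian of variance $B^2(s-kh)$, so that conditional Jensen gives $E[|M_s|\mid\mathcal{F}_{kh}]\le B\sqrt{2(s-kh)/\pi}$. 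Since every estimate here is uniform in $k$ and in the $\mathcal{F}_{kh}$-conditioning, I can take expectations and sum. I expect the only delicate points to be the careful justification of the distributional identification (and hence of the representation of both value functions on the same sequence $\{(X(kh),U_k)\}_k$) and of the Fubini interchange; the remaining steps are the routine chain of estimates sketched above.
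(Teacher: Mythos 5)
Your proposal is correct and follows essentially the same route as the paper's proof: identify the sampled diffusion with the controlled Markov chain at the sampling instants, split each interval $[kh,(k+1)h)$ into a discounting-mismatch term and a state-displacement term, bound the displacement by $B\bigl(h+\sqrt{2h/\pi}\bigr)$ via the bounded drift and diffusion coefficients, and sum the geometric series. The only differences are refinements of the same argument (your induction for the joint-law identification, the Dambis--Dubins--Schwarz/optional-stopping justification of the $\sqrt{2/\pi}$ constant, and your first term $Ch$ with $C\ge\|c\|_\infty$, which is in fact what the paper's own computation yields even though its statement writes $hB$).
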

\begin{proof}
The proof can be found in Appendix \ref{time_app_thm_proof}.
\end{proof}

\subsection{Analysis of term (\ref{term2})}
For the analysis of (\ref{term2}),  we will need to calculate the Lipschitz constants of the sampled controlled Markov chain in terms of the Lipschitz constants of the diffusion process introduced in Assumption \ref{diff_assmp}.

In what follows, we will focus on the controlled Markov chain constructed in Section \ref{contMDP}. Recall that we have
\begin{align*}
\mathcal{T}_h(dx_k|x_{k-1},u_{k-1})\sim X(h)
\end{align*}
where
\begin{align*}
X(h)=x_{k-1}+\int_0^{h}b(X(s),u_{k-1})ds+\int_0^h\sigma(X(s),u_{k-1})dB(s).
\end{align*} 

Furthermore,
\begin{align*}
\beta_h&:=e^{-\beta h}\\
c_h(x,u)&:=c(x,u)\times h.
\end{align*}
\begin{lemma}\label{kernel_x}
Under Assumption \ref{diff_assmp},
\begin{align*}
W_1(\mathcal{T}_h(\cdot|y,u),\mathcal{T}_h(\cdot|x,u))\leq |x-y|e^{(K+\frac{K^2}{2})h}.
\end{align*}
where $W_1$ denotes the first order Wasserstein distance.
\end{lemma}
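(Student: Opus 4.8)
The plan is to bound the Wasserstein distance by a synchronous coupling. Let $X^x(\cdot)$ and $X^y(\cdot)$ denote the strong solutions of (\ref{sampled_MDP}) on $[0,h]$ with the \emph{constant} control $u(s)\equiv u$, started from $x$ and $y$ respectively, and driven by the \emph{same} Wiener process $B$. By Assumption \ref{diff_assmp} these solutions exist and are unique, and the pair $(X^x(h),X^y(h))$ is a coupling of $\mathcal{T}_h(\cdot|x,u)$ and $\mathcal{T}_h(\cdot|y,u)$; hence $W_1(\mathcal{T}_h(\cdot|y,u),\mathcal{T}_h(\cdot|x,u))\le E\big[|X^x(h)-X^y(h)|\big]$, and it suffices to estimate the right-hand side.

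Write $D(t):=X^x(t)-X^y(t)$, so that $D(t)=(x-y)+\int_0^t\big(b(X^x(s),u)-b(X^y(s),u)\big)ds+\int_0^t\big(\sigma(X^x(s),u)-\sigma(X^y(s),u)\big)dB(s)$. I would apply It\^o's formula to $t\mapsto D(t)^2$ and take expectations. The stochastic integral term is a true martingale (hence mean zero): $\sigma$ is bounded by Assumption \ref{diff_assmp} and $D(\cdot)$ has finite second moments by standard SDE estimates, so its integrand is square-integrable. Using the Lipschitz bounds $|b(X^x(s),u)-b(X^y(s),u)|\le K|D(s)|$ and $|\sigma(X^x(s),u)-\sigma(X^y(s),u)|\le K|D(s)|$ (the action arguments coincide), one obtains $\frac{d}{dt}E[D(t)^2]\le (2K+K^2)E[D(t)^2]$. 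Gr\"onwall's inequality then gives $E[D(h)^2]\le |x-y|^2 e^{(2K+K^2)h}$, and Jensen's inequality (or Cauchy--Schwarz) yields $E[|D(h)|]\le \big(E[D(h)^2]\big)^{1/2}\le |x-y|\,e^{(K+K^2/2)h}$, which is exactly the claimed bound.

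The analytic bookkeeping — verifying that the local martingale is a genuine martingale, differentiating under the expectation, and establishing the a priori second-moment estimate on $D$ — is routine and relies only on the boundedness and Lipschitz hypotheses of Assumption \ref{diff_assmp}; I do not expect a substantive obstacle there. The only point worth a remark is that the synchronous coupling is admissible because the control entering the definition of $\mathcal{T}_h$ is the deterministic constant $u$, so non-anticipativity holds trivially and both $X^x$ and $X^y$ are adapted to the filtration generated by $B$. (One could alternatively bound $E[|D(h)|]$ directly via the Burkholder--Davis--Gundy inequality, but the $L^2$ route through It\^o's formula gives the sharp constant $K+K^2/2$ with the least effort.)
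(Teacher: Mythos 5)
Your proposal is correct and follows essentially the same route as the paper's proof in Appendix B: a synchronous coupling of the two solutions driven by the same Brownian motion, Itô's formula applied to the squared difference, Grönwall's inequality to get the $L^2$ bound $|x-y|^2 e^{(2K+K^2)h}$, and then Hölder/Jensen to pass to the $L^1$ bound. Your added remarks on the martingale property of the stochastic integral and the admissibility of the coupling are points the paper leaves implicit, but the substance of the argument is identical.
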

\begin{proof}
The proof can be found in Appendix \ref{kernel_x_proof}.
\end{proof}

\begin{lemma}\label{kernel_u}
Under Assumption \ref{diff_assmp}, if $h<1$
\begin{align*}
W_1(\mathcal{T}_h(\cdot|x,u),\mathcal{T}_h(\cdot|x,\hat{u}))\leq |u-\hat{u}|2K e^{2K^2h}.
\end{align*}
where $W_1$ denotes the first order Wasserstein distance.
\end{lemma}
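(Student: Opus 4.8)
The plan is to couple the two diffusions driven by the \emph{same} Brownian motion $B(\cdot)$: let $X(t)$ solve \eqref{diff} on $[0,h]$ with constant control $u$ and $\hat X(t)$ solve it with constant control $\hat u$, both started from the same point $X(0)=\hat X(0)=x$. Since a coupling of the laws $\mathcal{T}_h(\cdot|x,u)$ and $\mathcal{T}_h(\cdot|x,\hat u)$ is given by the joint law of $(X(h),\hat X(h))$, we get the Wasserstein bound $W_1(\mathcal{T}_h(\cdot|x,u),\mathcal{T}_h(\cdot|x,\hat u))\le \mathbb{E}|X(h)-\hat X(h)|$, and it suffices to estimate $\mathbb{E}|X(h)-\hat X(h)|$, or even $\sqrt{\mathbb{E}|X(h)-\hat X(h)|^2}$ by Cauchy--Schwarz. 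This is the same coupling idea used for Lemma \ref{kernel_x}, only now the discrepancy comes from the control argument rather than the initial condition.

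The key steps I would carry out, in order: (i) Write $D(t):=X(t)-\hat X(t)$ and subtract the two SDEs to get $D(t)=\int_0^t\bigl(b(X(s),u)-b(\hat X(s),\hat u)\bigr)ds+\int_0^t\bigl(\sigma(X(s),u)-\sigma(\hat X(s),\hat u)\bigr)dB(s)$. (ii) Apply It\^o's formula to $D(t)^2$ (or use the standard inequality $(a+b)^2\le 2a^2+2b^2$ together with the It\^o isometry) to obtain $\mathbb{E}[D(t)^2]\le 2t\,\mathbb{E}\int_0^t (b(X(s),u)-b(\hat X(s),\hat u))^2 ds + 2\,\mathbb{E}\int_0^t (\sigma(X(s),u)-\sigma(\hat X(s),\hat u))^2 ds$. (iii) Bound each integrand using the Lipschitz assumption in Assumption \ref{diff_assmp}: $|b(X(s),u)-b(\hat X(s),\hat u)|\le K(|D(s)|+|u-\hat u|)$ and similarly for $\sigma$, so each squared difference is $\le 2K^2 D(s)^2 + 2K^2|u-\hat u|^2$. (iv) Plug in, use $t\le h<1$ to absorb the extra factor of $t$ in the drift term, collect terms to reach an inequality of the form $g(t)\le a\,|u-\hat u|^2\, t + b\int_0^t g(s)\,ds$ where $g(t):=\mathbb{E}[D(t)^2]$ and $a,b$ are explicit multiples of $K^2$; (v) apply Gr\"onwall's inequality to get $g(h)\le a\,|u-\hat u|^2\,h\,e^{bh}$, and finally take square roots and track constants to match the claimed $|u-\hat u|\,2K e^{2K^2 h}$ (the $h<1$ hypothesis being exactly what lets one turn a factor $\sqrt{h}$ or $h$ into a clean constant, and what collapses $\sqrt{Kh+K^2h}$-type expressions into $2K e^{2K^2 h}$).

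The main obstacle is bookkeeping of constants so that the final bound comes out as exactly $2Ke^{2K^2 h}$ rather than some larger but equivalent expression; in particular one has to be careful about whether to estimate $\mathbb{E}|D(h)|$ directly or go through $\sqrt{\mathbb{E}|D(h)|^2}$, and about where the hypothesis $h<1$ is invoked (it is used to bound $h$ by $\sqrt h$ and to simplify the exponential rate). A secondary technical point is justifying the It\^o/Gr\"onwall manipulation rigorously — in particular that $\mathbb{E}[D(t)^2]<\infty$ for $t\le h$, which follows from the boundedness of $b,\sigma$ in Assumption \ref{diff_assmp} (giving a priori $L^2$ bounds on $X(h)$ and $\hat X(h)$) so that the stochastic integral is a true martingale and the It\^o isometry applies; this is routine given the standing assumptions but should be mentioned. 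No genuinely hard analysis is expected — the proof is a standard Gr\"onwall-type stability estimate for SDEs, parallel to the proof of Lemma \ref{kernel_x}.
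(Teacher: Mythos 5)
Your proposal follows essentially the same route as the paper's proof: couple the two diffusions through the same Brownian motion started from the same point, bound $\mathbb{E}[|X(h)-\hat X(h)|^2]$ via the Lipschitz assumption, the It\^o isometry/H\"older estimates and Gr\"onwall (with $h<1$ absorbing the extra factor of $h$ from the drift term), and finish with Cauchy--Schwarz to pass to the first moment. The only caveat is the one you flag yourself: carrying the factor $2$ from $(a+b)^2\le 2a^2+2b^2$ honestly yields a slightly larger constant (of the form $2\sqrt{2}\,K e^{4K^2h}$) than the stated $2Ke^{2K^2h}$ — the paper's own computation drops that cross-term factor — so this is a bookkeeping discrepancy in the constant, not a gap in your argument.
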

\begin{proof}
The proof can be found in Appendix \ref{kernel_u_proof}.
\end{proof}

\begin{proposition}\label{quant_policy}
Under Assumption \ref{diff_assmp}, if $\beta>K+\frac{K^2}{2}$, and if $h<1$,
\begin{align*}
\sup_{x_0\in\mathds{X}}\left|J_{\beta_h}(x_0,\hat{\gamma}_h)-J^*_{\beta_h}(x_0)\right|\leq& \frac{Kh-Khe^{h(K+\frac{K^2}{2}-\beta)}+2K^2he^{h(2K^2-\beta)}}{(1-e^{-\beta h})(1-e^{h(K+\frac{K^2}{2}-\beta)})}L_\mathds{U}\\
&+ \frac{Kh}{(1-e^{-\beta h})(1-e^{h(K+\frac{K^2}{2}-\beta)})}L_\mathds{X}
\end{align*}  
\end{proposition}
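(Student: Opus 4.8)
The plan is to obtain this as a direct corollary of Proposition \ref{finite_app_thm}, applied to the controlled Markov chain of Section \ref{contMDP}: I would take the ``general'' chain there to have transition kernel $\mathcal{T}=\mathcal{T}_h$, stage-wise cost $c=c_h$, and discount factor $\beta=\beta_h=e^{-\beta h}$, and take $\hat{\gamma}=\hat{\gamma}_h$ to be the optimal policy of the finite-state MDP of Section \ref{finiteMDP}, which is built from $(\mathcal{T}_h,c_h,\beta_h)$ by exactly the state-aggregation construction to which Proposition \ref{finite_app_thm} applies. Under this identification the quantities $J_\beta(x_0,\hat{\gamma})$ and $J^*_\beta(x_0)$ in Proposition \ref{finite_app_thm} are precisely $J_{\beta_h}(x_0,\hat{\gamma}_h)$ and $J^*_{\beta_h}(x_0)$, so it only remains to compute the four constants of Assumption \ref{finite_model_assmp} for the chain $(\mathcal{T}_h,c_h)$ and to check the standing hypothesis $\beta_h\alpha_\mathcal{T}^\mathds{X}<1$.

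First I would read off the cost constants. Since $c_h(x,u)=h\,c(x,u)$ and $c$ is $K$-Lipschitz in each argument by Assumption \ref{diff_assmp}, we may take $\alpha_c^\mathds{X}=\alpha_c^\mathds{U}=Kh$. The kernel constants are then supplied by the two lemmas already proved: Lemma \ref{kernel_x} gives $\alpha_\mathcal{T}^\mathds{X}=e^{(K+K^2/2)h}$ and Lemma \ref{kernel_u} gives $\alpha_\mathcal{T}^\mathds{U}=2Ke^{2K^2h}$ (the latter, and hence this proposition, under the mild restriction $h<1$). All four are finite, so Assumption \ref{finite_model_assmp} holds.

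Next I would verify the contraction hypothesis: $\beta_h\alpha_\mathcal{T}^\mathds{X}=e^{-\beta h}e^{(K+K^2/2)h}=e^{h(K+K^2/2-\beta)}$, which is strictly less than $1$ exactly when $\beta>K+K^2/2$ --- the hypothesis of the proposition. In particular $1-\beta_h\alpha_\mathcal{T}^\mathds{X}=1-e^{h(K+K^2/2-\beta)}>0$ and $1-\beta_h=1-e^{-\beta h}>0$, so the bound to be written is a genuine positive upper bound. Substituting $\beta\mapsto\beta_h$, $\alpha_c^\mathds{X}=\alpha_c^\mathds{U}\mapsto Kh$, $\alpha_\mathcal{T}^\mathds{X}\mapsto e^{(K+K^2/2)h}$, and $\alpha_\mathcal{T}^\mathds{U}\mapsto 2Ke^{2K^2h}$ into the first (unsymmetrized) inequality of Proposition \ref{finite_app_thm}, the denominator $(1-\beta)(1-\beta\alpha_\mathcal{T}^\mathds{X})$ becomes $(1-e^{-\beta h})(1-e^{h(K+K^2/2-\beta)})$, the $L_\mathds{X}$-coefficient numerator $\alpha_c^\mathds{X}$ becomes $Kh$, and the $L_\mathds{U}$-coefficient numerator $\alpha_c^\mathds{X}-\beta\alpha_\mathcal{T}^\mathds{X}\alpha_c^\mathds{U}+\beta\alpha_c^\mathds{X}\alpha_\mathcal{T}^\mathds{U}$ becomes $Kh-Khe^{h(K+K^2/2-\beta)}+2K^2he^{h(2K^2-\beta)}$; this is exactly the asserted inequality.

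There is no genuine obstacle here: the work is entirely bookkeeping on top of Proposition \ref{finite_app_thm} and Lemmas \ref{kernel_x}--\ref{kernel_u}, all of which are available. The only two points meriting attention are the sign of $1-\beta_h\alpha_\mathcal{T}^\mathds{X}$, which is precisely where the hypothesis $\beta>K+K^2/2$ enters, and the implicit requirement $h<1$ that Lemma \ref{kernel_u} imposes when it controls the kernel's $\mathds{U}$-Lipschitz constant.
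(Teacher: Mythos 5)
Your proposal is correct and is exactly the paper's argument: apply Proposition \ref{finite_app_thm} to the chain $(\mathcal{T}_h,c_h,\beta_h)$ with $\alpha_c^\mathds{X}=\alpha_c^\mathds{U}=Kh$ from Assumption \ref{diff_assmp} and the kernel constants from Lemmas \ref{kernel_x} and \ref{kernel_u}, noting $\beta_h\alpha_\mathcal{T}^\mathds{X}=e^{h(K+K^2/2-\beta)}<1$ under the hypothesis $\beta>K+\frac{K^2}{2}$. Your additional remark that the bound implicitly inherits the $h<1$ restriction from Lemma \ref{kernel_u} is a fair observation that the paper leaves tacit.
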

\begin{proof}
The proof is a direct implication of \cite[Theorem 2.6]{kara2021qlearning}, which states that 

\begin{align*}
\sup_{x_0\in\mathds{X}}\left|J_{\beta_h}(x_0,\hat{\gamma}_h)-J^*_{\beta_h}(x_0)\right|\leq \frac{\alpha_c^{\mathds{X}}-\beta_h\alpha_\mathcal{T}^\mathds{X}\alpha_c^\mathds{U}+\beta_h\alpha_c^\mathds{X}\alpha_\mathcal{T}^\mathds{U}}{(1-\beta)(1-\beta_h\alpha_\mathcal{T}^\mathds{X})}L_\mathds{U}+\frac{\alpha_c^\mathds{X}}{(1-\beta_h)(1-\beta_h\alpha_\mathcal{T}^\mathds{X})}L_\mathds{X},
\end{align*}  
for constants  $\alpha_c^\mathds{X},  \alpha_c^\mathds{U},\alpha_\mathcal{T}^\mathds{X},\alpha_\mathcal{T}^\mathds{U}<\infty$ such that
\begin{itemize}
\item $|c(x,u)-c(x',u)|\leq \alpha_c^\mathds{X}|x-x'|$, 
\item $|c(x,u)-c(x,u')|\leq \alpha_c^\mathds{U}|u-u'|$,  
\item $W_1(\mathcal{T}(\cdot|x,u),\mathcal{T}(\cdot|x',u))\leq \alpha_\mathcal{T}^\mathds{X}|x-x'|$,
\item $W_1(\mathcal{T}(\cdot|x,u),\mathcal{T}(\cdot|x,u'))\leq \alpha_\mathcal{T}^\mathds{U}|u-u'|$,
\end{itemize}
Hence, the result follows from  Lemma \ref{kernel_x} and Lemma \ref{kernel_u}, by noting that $\alpha_c^\mathds{X},\alpha_c^\mathds{U}\leq h K$.
\end{proof}

\subsection{Analysis of term (\ref{term3})}
 Recall that (\ref{term3}) deals with the optimal value function of the sampled controlled Markov chain and the optimal value function of the controlled diffusion process. Hence, we make use of finite difference approximation methods for Bellman equations. The following result, taken from \cite{Krylov1999, jakobsen2019}, gives an upper bound on the performance loss of the piece-wise constant policies applied for the diffusion processes. 

\begin{lemma}[\cite{Krylov1999, jakobsen2019}]\label{constant_policy_bound}
Let $u_h^*$ denote the piece-wise constant control process, which is constant over the intervals $[kh,k(h+1))$ for $k\in\{0,1,\dots\}$, that achieves the minimum cost for (\ref{cost})  over such piece-wise constant policies. Under Assumption \ref{diff_assmp}, we have
\begin{align*}
W_\beta(x_0,u_h^*)-W_\beta^*(x_0)\leq N h^{\frac{1}{4}}
\end{align*}
for some constant $N<\infty$, which only depends on the discount factor $\beta$ and the Lipschitz coefficients of the model.
\end{lemma}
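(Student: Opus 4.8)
\noindent\emph{Proof strategy.}
The plan is to prove only the upper bound $W_\beta(x_0,u_h^*)-W_\beta^*(x_0)\le Nh^{1/4}$, since the reverse inequality is immediate: every piece-wise constant control is admissible, so $W_\beta(x_0,u_h^*)\ge W_\beta^*(x_0)$. Write $V_h(x_0):=W_\beta(x_0,u_h^*)=\inf_u W_\beta(x_0,u)$, the infimum taken over controls constant on each $[nh,(n+1)h)$. Because such controls restart at the sampling instants, $V_h$ is the unique bounded fixed point of the one-step dynamic programming operator
\begin{align*}
\mathsf{T}_h\psi(x)=\inf_{a\in\mathds{U}}\Big\{E_x^{a}\Big[\int_0^h e^{-\beta s}c(X(s),a)\,ds\Big]+e^{-\beta h}\,E_x^{a}\big[\psi(X(h))\big]\Big\},
\end{align*}
where on $[0,h)$ the process $X$ solves (\ref{diff}) with the constant control $a$; the operator $\mathsf{T}_h$ is monotone and a sup-norm contraction with modulus $e^{-\beta h}$. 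Hence it suffices to produce a function $w$ with $\mathsf{T}_h w\le w+\eta_h$ everywhere and $\|w-W_\beta^*\|_\infty\le\rho_h$: monotone iteration of the contraction then gives $V_h\le w+\eta_h/(1-e^{-\beta h})\le W_\beta^*+\rho_h+\eta_h/(1-e^{-\beta h})$, and it remains to choose the auxiliary parameters so that $\rho_h+\eta_h/(1-e^{-\beta h})=O(h^{1/4})$.

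The function $w$ comes from Krylov's ``shaking the coefficients'' regularization, which we only need to run on one side. Under Assumption \ref{diff_assmp} the data $b,\sigma,c$ are bounded and Lipschitz, so $W_\beta^*$ is bounded (by $C/\beta$) and Lipschitz in $x$, and it is the unique bounded viscosity solution of the Hamilton--Jacobi--Bellman equation $\beta W(x)=\inf_{a\in\mathds{U}}\{b(x,a)W'(x)+\tfrac12\sigma^2(x,a)W''(x)+c(x,a)\}$. Fixing parameters $0<\delta\le\varepsilon$, one perturbs by up to $\varepsilon$ the spatial argument inside $b,\sigma,c$ in this control problem; the value function of the perturbed problem stays Lipschitz and within $C\varepsilon$ of $W_\beta^*$, and — because the Hamiltonian is convex in $(W,W',W'')$, and a convex combination of the $\varepsilon$-admissible spatial shifts of a Lipschitz solution is still a viscosity solution of the un-shaken equation — mollifying it at scale $\delta\le\varepsilon$ produces a smooth $w=w_{\varepsilon,\delta}$ with $\|w-W_\beta^*\|_\infty\le C(\varepsilon+\delta)$, the one-sided HJB bound
\begin{align*}
\inf_{a\in\mathds{U}}\big\{b(x,a)w'(x)+\tfrac12\sigma^2(x,a)w''(x)+c(x,a)\big\}-\beta w(x)\le C(\varepsilon+\delta)\qquad\text{for all }x,
\end{align*}
and — since $W_\beta^*$ is Lipschitz — the mollification bounds $\|w^{(j)}\|_\infty\le C_j\,\delta^{-(j-1)}$ for $j=1,2,3,4$ (the point of the $\varepsilon$-shaking is precisely that it absorbs the $O(1)$-size defect that a naive mollification at scale $\delta$ would introduce into the HJB inequality, leaving only $O(\varepsilon+\delta)$).

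It then remains to estimate $\mathsf{T}_h w-w$. Fix $x$, pick $a^*=a^*(x)$ almost attaining the infimum in the displayed HJB bound, run the constant control $a^*$ on $[0,h)$, and apply It\^o's formula to $e^{-\beta s}w(X(s))$ to get $\mathsf{T}_h w(x)\le w(x)+E_x^{a^*}\big[\int_0^h e^{-\beta s}g_{a^*}(X(s))\,ds\big]$ with $g_a(z):=c(z,a)+b(z,a)w'(z)+\tfrac12\sigma^2(z,a)w''(z)-\beta w(z)$. Here $g_{a^*}(x)\le C(\varepsilon+\delta)$ by construction, while the key point is a \emph{second-order} Taylor expansion of the smooth map $g_{a^*}$: using the one-step moment bounds $|E_x^{a^*}[X(s)-x]|\le Bs$ and $E_x^{a^*}[(X(s)-x)^2]\le Cs$ (valid for $s\le h\le 1$) together with $\|g_a'\|_\infty=O(\delta^{-2})$ and $\|g_a''\|_\infty=O(\delta^{-3})$ (from the derivative bounds on $w$), one gets $|E_x^{a^*}[g_{a^*}(X(s))-g_{a^*}(x)]|\le C's\,\delta^{-3}$. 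Integrating over $[0,h)$ and absorbing the $O(h^2)$ discounting error gives $\mathsf{T}_h w(x)\le w(x)+C(\varepsilon+\delta)h+Ch^2\delta^{-3}$, so $\eta_h=C(\varepsilon+\delta)h+Ch^2\delta^{-3}$ and $\rho_h=C(\varepsilon+\delta)$; since $1-e^{-\beta h}\ge\tfrac12\beta h$ for $h$ small, this yields $V_h(x_0)-W_\beta^*(x_0)\le C''\big(\varepsilon+\delta+h\,\delta^{-3}\big)$, and taking $\varepsilon=\delta=h^{1/4}$ gives the claimed $Nh^{1/4}$ with $N$ depending only on $\beta$ and the constants $B,K,C$ of Assumption \ref{diff_assmp}. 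The main obstacle — the step I would lean on the references cited in the statement for — is the shaking/mollification construction of $w$, i.e. establishing the one-sided HJB bound with the benign defect $O(\varepsilon+\delta)$ and the correct orientation of the inequality, which also needs the comparison principle for the (possibly degenerate) HJB equation; everything else is a routine It\^o/Taylor computation, and only this one inequality is needed because the matching lower bound is automatic.
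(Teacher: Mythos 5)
You should first note that the paper itself does not prove this lemma at all: it is imported verbatim from \cite{Krylov1999, jakobsen2019}, so the only meaningful comparison is with the arguments in those references. Your overall scaffolding is fine and matches the known arithmetic: reducing the claim to finding a smooth $w$ with $\|w-W_\beta^*\|_\infty\le\rho_h$ and $\mathsf{T}_h w\le w+\eta_h$, the contraction/monotonicity step $V_h\le w+\eta_h/(1-e^{-\beta h})$, the It\^o--Taylor consistency estimate with mollification bounds $\|w^{(j)}\|_\infty\lesssim\delta^{-(j-1)}$ giving $\eta_h\le C(\varepsilon+\delta)h+Ch^2\delta^{-3}$, and the choice $\varepsilon=\delta=h^{1/4}$ do produce the rate $h^{1/4}$.

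The genuine gap is the pivotal one-sided HJB bound for $w$, and it is not a detail you can ``lean on the references'' for, because their proofs are structured precisely to avoid the step you assert. Write $F[\phi](x):=\inf_{a}\{b(x,a)\phi'(x)+\tfrac12\sigma^2(x,a)\phi''(x)+c(x,a)\}-\beta\phi(x)$; this is an infimum of affine functions of $(\phi,\phi',\phi'')$, hence concave in those arguments. Shaking the coefficients of the \emph{original} problem makes every shift $W^{*,\varepsilon}(\cdot-e)$, $|e|\le\varepsilon$, satisfy $F\ge0$ in the viscosity sense, and Jensen's inequality then gives $F[w]\ge0$ for the mollification $w$ --- the \emph{opposite} of the inequality $F[w]\le C(\varepsilon+\delta)$ that your argument requires in order to select $a^*$ with $g_{a^*}(x)$ small. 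A smooth $w$ with $F[w]\ge0$ has $g_a(x)\ge0$ for every $a$, and feeding that into your own It\^o/contraction machinery only yields $V_h\ge W_\beta^*-O(\varepsilon+\delta)$, i.e.\ the trivial direction (which you already get from admissibility of piece-wise constant controls). The bound you need is an approximate \emph{supersolution} property of the mollified function, which convex combination does not preserve (Jensen goes the wrong way for an infimum) and which fails in general for mollified Lipschitz value functions unless one has semiconcavity of $W_\beta^*$ --- not available under the merely Lipschitz Assumption \ref{diff_assmp} and not invoked by you. This asymmetry is exactly why Krylov and Jakobsen--Picarelli--Reisinger prove the direction $W_\beta(x_0,u_h^*)\le W_\beta^*(x_0)+Nh^{1/4}$ the other way around: they shake and mollify the value function of the \emph{piece-wise constant} (scheme) problem, use the same consistency computation to show the resulting smooth function is an approximate subsolution of the continuous-time HJB equation, and then invoke the comparison principle for the HJB to dominate it by $W_\beta^*$. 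So the regularization must be applied to the other object and the conclusion drawn from PDE comparison rather than from the contraction of $\mathsf{T}_h$; as written, your construction of $w$ does not exist, and the proof does not go through.
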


\begin{remark}\label{1ordererror}
One might expect that if the dynamics of the diffusion process are changing slowly, and the cost function does not have rapid changes with respect to the state process, then the time discretizetion leads to smaller performance losses. Indeed, as it is shown in \cite[Proposition 2.4]{jakobsen2019}, if $W_\beta$ and $c$ are regular enough, namely if 
\begin{align*}
\sup_u\bigg(\|L_u(L_uW_\beta)\|_\infty+\|L_uc\|_\infty\bigg)<\infty
\end{align*}
where $L_u$ is the generator function of the diffusion process for some control action $u$, then one might have
\begin{align*}
W_\beta(x_0,u_h^*)-W_\beta^*(x_0)\leq N h.
\end{align*}
\end{remark}

We are now ready to analyze (\ref{term3}).

\begin{proposition}\label{constant_policy}
Under Assumption \ref{diff_assmp}
\begin{align*}
J_{\beta_h}^*(x_0)-W_{\beta}^*(x_0) \leq hB+\frac{K B h}{1-e^{-\beta h}}\left(h+\sqrt{\frac{2h}{\pi}}\right)+N h^{\frac{1}{4}}.
\end{align*}
\end{proposition}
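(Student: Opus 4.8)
The plan is to realize the cost-optimal piecewise-constant control of the diffusion as a feedback policy for the sampled Markov chain of Section \ref{contMDP}, and then to combine the two-sided estimate of Proposition \ref{time_app_thm} with the time-discretization bound of Lemma \ref{constant_policy_bound}. In short, $J_{\beta_h}^*$ can only be smaller than the $\beta_h$-discounted MDP cost of that feedback policy, that cost is within the Proposition \ref{time_app_thm} error of the continuous-time cost $W_\beta$ of the corresponding piecewise-constant control, and that control is within $Nh^{1/4}$ of $W_\beta^*$ by Lemma \ref{constant_policy_bound}.

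First I would let $u_h^*$ be the optimal piecewise-constant control process of Lemma \ref{constant_policy_bound}, i.e.\ the one constant on each interval $[nh,(n+1)h)$ that minimizes (\ref{cost}). Since the sampled process $\{X(nh)\}_{n\geq 0}$ is a controlled Markov chain with transition kernel $\mathcal{T}_h$, the dynamic programming principle lets one take $u_h^*$ in feedback form, $u_h^*(t)=\gamma^*\big(X(nh)\big)$ for $t\in[nh,(n+1)h)$, for some admissible policy $\gamma^*\in\Gamma$; by the Remark following (\ref{MDP1_optcost}), $\gamma^*$ is an admissible policy for the Section \ref{contMDP} chain whose state process agrees in distribution with $X(nh)$, and $u_h^*$ is exactly its associated piecewise-constant control process.

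Then, because $J_{\beta_h}^*(x_0)$ is the optimal cost for that Markov chain, $J_{\beta_h}^*(x_0)\leq J_{\beta_h}(x_0,\gamma^*)$. Proposition \ref{time_app_thm} applied with $\gamma=\gamma^*$ and $u_h=u_h^*$ yields
\[
J_{\beta_h}(x_0,\gamma^*)\leq W_\beta(x_0,u_h^*)+hB+\frac{KBh}{1-e^{-\beta h}}\Big(h+\sqrt{\tfrac{2h}{\pi}}\Big),
\]
while Lemma \ref{constant_policy_bound} gives $W_\beta(x_0,u_h^*)\leq W_\beta^*(x_0)+Nh^{1/4}$. Chaining these inequalities produces the stated bound, and since none of the constants depend on $x_0$ it is uniform in $x_0$.

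The only genuinely delicate point is the first step: justifying that a cost-minimizing piecewise-constant control can be taken of feedback form $\gamma^*(X(nh))$ so that Proposition \ref{time_app_thm} applies. This is the standard measurable-selection / dynamic-programming argument for controlled Markov chains (alternatively, one works with an $\varepsilon$-optimal feedback control and lets $\varepsilon\to 0$); moreover, in the finite-difference constructions of \cite{Krylov1999,jakobsen2019} underlying Lemma \ref{constant_policy_bound} the near-optimal piecewise-constant control is already of feedback type. Everything else is the elementary chaining above.
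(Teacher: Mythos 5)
Your proposal is correct and follows essentially the same route as the paper: decompose through $W_\beta(x_0,u_h^*)$, bound $J_{\beta_h}^*(x_0)\leq J_{\beta_h}(x_0,\gamma^*)$ for the Markov-chain policy corresponding to $u_h^*$, and then apply Proposition \ref{time_app_thm} and Lemma \ref{constant_policy_bound}. The only difference is that you explicitly flag and justify the identification of $u_h^*$ with an admissible feedback policy for the sampled chain, a point the paper passes over by simply declaring the existence of the ``corresponding'' policy $\gamma_h^*$ with the same law at the sampling instances.
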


\begin{proof}
We start by the following bound
\begin{align*}
J_{\beta_h}^*(x_0)-W_{\beta}^*(x_0)&\leq J_{\beta_h}^*(x_0)-W_\beta(x_0,u_h^*)+W_\beta(x_0,u_h^*)-W_\beta^*(x_0)\\
&\leq J_{\beta_h}^*(x_0)-W_\beta(x_0,u_h^*)+ N h^{\frac{1}{4}}
\end{align*}
where the last bound follows directly from Theorem \ref{constant_policy_bound}. For the first term, let $\gamma_h^*$ denote the corresponding control policy for the controlled Markov chain ( that has the same law at the sampling instances as $u_h^*$). Note that this policy is not necessarily optimal for the sampled controlled Markov chain. Hence, we have the following bound:
\begin{align*}
 J_{\beta_h}^*(x_0)-W_\beta(x_0,u_h^*)&\leq J_{\beta_h}(x_0,\gamma_h^*)-W_\beta(x_0,u_h^*)\\
&\leq hB+\frac{K B h}{1-e^{-\beta h}}\left(h+\sqrt{\frac{2h}{\pi}}\right).
\end{align*}
The last bound follows from Theorem \ref{time_app_thm}. 

Combining what we have so far, we can conclude that
\begin{align*}
J_{\beta_h}^*(x_0)-W_{\beta}^*(x_0)\leq hB+\frac{K B h}{1-e^{-\beta h}}\left(h+\sqrt{\frac{2h}{\pi}}\right)+N h^{\frac{1}{4}}.
\end{align*}
\end{proof}

\subsection{Near Optimality of the Approximate Control}
Combining the results, we have presented so far, we can now state the main theorem of this section:
\begin{theorem}\label{main_thm1}
Under Assumption \ref{diff_assmp}, we have
\begin{align*}
W_\beta(x_0,u_h)-W_\beta^*(x_0)&\leq e(h,L_\mathds{X},L_\mathds{U})
\end{align*}
where 
\begin{align}\label{e_bound}
e(h,L_\mathds{X},L_\mathds{U})&:=2\left(hB+\frac{K B h}{1-e^{-\beta h}}\left(h+\sqrt{\frac{2h}{\pi}}\right)\right)\nonumber\\
&\quad+\frac{Kh-Khe^{h(K+\frac{K^2}{2}-\beta)}+2K^2he^{h(2K^2-\beta)}}{(1-e^{-\beta h})(1-e^{h(K+\frac{K^2}{2}-\beta)})}L_\mathds{U}\nonumber\\
&\quad+ \frac{Kh}{(1-e^{-\beta h})(1-e^{h(K+\frac{K^2}{2}-\beta)})}L_\mathds{X}\nonumber\\
&\quad+N h^{\frac{1}{4}}.
\end{align}
where $u_h$ is the control process obtained with $\gamma_h$, which is optimal for the finite MDP model constructed in Section \ref{finiteMDP}.
\end{theorem}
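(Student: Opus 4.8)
The plan is to combine the three estimates already assembled in this section via the decomposition
\begin{align*}
W_\beta(x_0,u_h)-W_\beta^*(x_0)&=\big(W_\beta(x_0,u_h)-J_{\beta_h}(x_0,\gamma_h)\big)+\big(J_{\beta_h}(x_0,\gamma_h)-J_{\beta_h}^*(x_0)\big)\\
&\quad+\big(J_{\beta_h}^*(x_0)-W_\beta^*(x_0)\big),
\end{align*}
which is precisely (\ref{term1})--(\ref{term3}), and to bound the three summands in turn; since we only need an upper bound on the left-hand side, it suffices to control $|$(\ref{term1})$|$, (\ref{term2})$\ge 0$, and (\ref{term3}) from above.

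For the first summand I would first note that $\gamma_h\colon\mathds{X}_h\to\mathds{U}_h$ composed with the quantizer $\phi_{\mathds{X}}$ is a stationary, hence admissible, policy for the sampled controlled Markov chain, and $u_h$ is exactly the piece-wise constant control process it induces on the diffusion; therefore Proposition \ref{time_app_thm} applies and yields $|W_\beta(x_0,u_h)-J_{\beta_h}(x_0,\gamma_h)|\le hB+\frac{K B h}{1-e^{-\beta h}}(h+\sqrt{2h/\pi})$. For the third summand, Proposition \ref{constant_policy} gives $J_{\beta_h}^*(x_0)-W_\beta^*(x_0)\le hB+\frac{K B h}{1-e^{-\beta h}}(h+\sqrt{2h/\pi})+N h^{1/4}$. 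Added together, these two bounds account for the $2\big(hB+\frac{K B h}{1-e^{-\beta h}}(h+\sqrt{2h/\pi})\big)$ and $N h^{1/4}$ contributions to $e(h,L_{\mathds X},L_{\mathds U})$.

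For the middle summand I would invoke the hypothesis of the theorem that $\gamma_h$ is optimal for the finite-state MDP of Section \ref{finiteMDP} (the fact proven in Section \ref{q_conv}), so that $\gamma_h$ coincides with the policy $\hat\gamma_h$ of Proposition \ref{quant_policy}; that proposition then bounds $J_{\beta_h}(x_0,\gamma_h)-J_{\beta_h}^*(x_0)$ by exactly the two remaining terms of $e$, the $L_{\mathds U}$ coefficient and the $L_{\mathds X}$ coefficient. Summing the three bounds and taking the supremum over $x_0\in\mathds{X}$ gives the stated inequality. The proof is thus pure bookkeeping and introduces no new estimate; the only point requiring attention is that the middle bound — through Proposition \ref{quant_policy}, hence Proposition \ref{finite_app_thm} and Lemma \ref{kernel_u} — tacitly needs $\beta>K+\frac{K^2}{2}$ and $h<1$, so these should be carried as standing assumptions (equivalently, one restricts to $h$ small enough that $e^{(K+\frac{K^2}{2})h}$ and $e^{2K^2h}$ stay controlled), and there is no genuine obstacle beyond keeping track of these conditions.
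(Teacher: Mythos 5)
Your proposal is correct and follows exactly the route the paper intends: Theorem \ref{main_thm1} is simply the sum of the decomposition (\ref{term1})--(\ref{term3}) with Proposition \ref{time_app_thm}, Proposition \ref{quant_policy}, and Proposition \ref{constant_policy}, which is why the paper states it without a separate proof. Your remark that the middle term tacitly requires $\beta>K+\frac{K^2}{2}$ (and $h<1$ via Lemma \ref{kernel_u}), conditions not repeated in the theorem statement, is an accurate and worthwhile observation.
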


Note that, as stated in Remark \ref{1ordererror}, the last term in Theorem \ref{main_thm1} can be replaced by $Nh$, if the dynamics and the cost function are regular enough, e.g. if they do not change rapidly with time.

\begin{corollary}\label{cor1}
For small $h$, the upper bound derived in Theorem \ref{main_thm1}, behaves as
\begin{align*}
C\left(\sqrt{h}+\frac{L_\mathds{X}+L_\mathds{U}}{h}+h^{\frac{1}{4}}\right)
\end{align*}
for some $C<\infty$. This representation makes the distinction between the effects of  different steps of the approximation. The first term results from the Markov chain approximation of the diffusion process, the second term is due to the state and action space discretization, and finally the last term is due to the piece-wise constant control processes. We note again that the last term may be replaced with $h$ if the dynamics are regular enough.
\end{corollary}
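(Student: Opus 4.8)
The plan is to carry out an elementary asymptotic analysis of the four groups of terms making up $e(h,L_\mathds{X},L_\mathds{U})$ in (\ref{e_bound}) as $h\to 0$, showing that each group is dominated by a constant multiple of one of $\sqrt h$, $(L_\mathds{X}+L_\mathds{U})/h$, or $h^{\frac14}$, and then taking $C$ to be the maximum of the resulting constants (for $h$ bounded away from $0$ there is nothing to prove, so the claim is only about $h\le h_0$ for a suitable threshold). The only inputs are the elementary two-sided estimates: for any fixed $a>0$ there are constants $0<c_1<c_2<\infty$ and $h_0>0$ with $c_1 a h\le 1-e^{-ah}\le c_2 a h$ for $0<h\le h_0$ (convexity/concavity of $t\mapsto e^{-t}$ on a bounded interval), together with the fact that $e^{bh}$ lies between two fixed positive constants on $0<h\le h_0$ for any fixed $b$.

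For the first group, $1-e^{-\beta h}\ge c_1\beta h$ gives $\dfrac{KBh}{1-e^{-\beta h}}\le \dfrac{KB}{c_1\beta}$, so the group is at most $2\bigl(hB+\tfrac{KB}{c_1\beta}(h+\sqrt{2h/\pi})\bigr)$; since $h\le\sqrt h$ for $h\le 1$, this is $\le C_1\sqrt h$ for small $h$. For the second and third groups, set $a:=\beta-K-\tfrac{K^2}{2}$, which is positive under the standing hypothesis $\beta>K+\tfrac{K^2}{2}$ of Proposition \ref{quant_policy}; this is the one place the sign condition is genuinely used. Then the common denominator satisfies $(1-e^{-\beta h})\bigl(1-e^{h(K+K^2/2-\beta)}\bigr)=(1-e^{-\beta h})(1-e^{-ah})\ge c_1^2\,a\beta\,h^2$. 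The $L_\mathds{X}$–coefficient has numerator $Kh$, hence is $\le \dfrac{K}{c_1^2 a\beta}\cdot\dfrac1h$; the $L_\mathds{U}$–numerator splits as $Kh\bigl(1-e^{-ah}\bigr)+2K^2he^{h(2K^2-\beta)}$, where the first piece is $\le Kah^2$ and the second is $\le C' h$ on $0<h\le h_0$, so that numerator is $\le C'' h$ and the coefficient is $\le \dfrac{C''}{c_1^2 a\beta}\cdot\dfrac1h$. Thus the two middle groups together are at most $C_2(L_\mathds{X}+L_\mathds{U})/h$. The fourth group is literally $Nh^{\frac14}$. Taking $C:=\max\{C_1,C_2,N\}$ yields $e(h,L_\mathds{X},L_\mathds{U})\le C\bigl(\sqrt h+\tfrac{L_\mathds{X}+L_\mathds{U}}{h}+h^{\frac14}\bigr)$, which is the assertion. (One may note in passing that $\sqrt h\le h^{\frac14}$ on $(0,1]$, so the first term is formally redundant; it is kept only to make visible that it originates from the Markov-chain approximation of the diffusion rather than from the piece-wise constant control.)

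I expect no real obstacle here: the argument is bookkeeping with Taylor/convexity estimates. The single point that needs care is that \emph{both} factors in the denominator of the two middle terms must be bounded below by a positive multiple of $h$, which is exactly what the condition $a=\beta-K-\tfrac{K^2}{2}>0$ guarantees — without it the factor $1-e^{h(K+K^2/2-\beta)}$ could vanish or change sign and the bound would degrade. A secondary subtlety is recognizing that the difference $Kh-Khe^{h(K+K^2/2-\beta)}$ in the $L_\mathds{U}$–numerator is only $O(h^2)$, hence negligible against the $O(h)$ term $2K^2he^{h(2K^2-\beta)}$, so that the $L_\mathds{U}$–coefficient is genuinely $\Theta(1/h)$ and not of lower order.
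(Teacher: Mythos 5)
Your argument is correct and is exactly the (implicit) argument behind the corollary: the paper offers no separate proof, treating it as the evident small-$h$ expansion of the bound $e(h,L_\mathds{X},L_\mathds{U})$ from Theorem \ref{main_thm1}, with $1-e^{-ah}\asymp ah$ giving $\sqrt h$ for the first group, $\Theta(1/h)$ coefficients for the $L_\mathds{X},L_\mathds{U}$ terms (using $\beta>K+\tfrac{K^2}{2}$ from Proposition \ref{quant_policy}), and $Nh^{1/4}$ unchanged. Your side remarks --- that the sign condition on $\beta-K-\tfrac{K^2}{2}$ is where positivity of the denominator comes from, and that $\sqrt h\le h^{1/4}$ makes the first term formally redundant but interpretively useful --- are accurate and consistent with the paper's discussion.
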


\section{Convergence of the Learning Algorithm and Near Optimality of the Learned Policies}\label{q_conv}
In this section we present the main results of the paper. 

We first show that the iterations in (\ref{QPOMDP}) converge to the optimal Q values of the approximate controlled Markov chain constructed in Section \ref{finiteMDP}. Recall that the Q value iterations are in the following form:

\begin{align}\label{QPOMDP2}
Q_{k+1}(\hat{x},\hat{u})=(1-\alpha_k&(\hat{x},\hat{u}))Q_k(\hat{x},\hat{u})\nonumber\\
&+\alpha_k(\hat{x},\hat{u})\left(c(X(k\times h),\hat{u})\times h+\beta_h \min_{v\in\mathds{U}_h} Q_k\Big(\hat{X}_{k+1},v\Big)\right)
\end{align}
where $\beta_h=e^{-\beta\times h}$, and $\hat{X}_{k+1}$ is the discretized sampled state we observe following $\phi_\mathds{X}(x(i\times h))=\hat{x}$. Furthermore, $\phi_\mathds{X}$ maps the original state space $\mathds{X}$ to the finite subset $\mathds{X}_h$.

\begin{assumption}\label{learning_assmp}
\quad
\begin{itemize}
\item[]                i.  $\alpha_k(\hat{x},\hat{u})=0$ unless $(\hat{X}_k,u_k)=(\hat{x},\hat{u})$. Furthermore,
\[\alpha_k(\hat{x},\hat{u}) = {1 \over 1+ \sum_{t=0}^{k} 1_{\{\hat{X}_t=\hat{x}, u_t=\hat{u}\}} }.\]
This implies $\alpha_{k}(\hat{x},\hat{u})=\frac{1}{t+1}$ if we have visited $(\hat{x},\hat{u})$ pair $t$ many times until time $k$, i.e. the learning rates are linear.
\item[ ]             ii.  The controlled diffusion process converges to its unique invariant measure under the exploration policy.
\item[ ] iii. Every $\hat{x}\in\mathds{X}_h$ and $\hat{u}\in\mathds{U}_h$ is visited infinitely often during exploration. 
\end{itemize}
\end{assumption}

\begin{remark}
For the stability assumption (ii), we need the exploration policy to be a stabilizing policy such that it leads the process to its invariant measure. Since we use piece-wise constant policies for exploration, the stability can be tested using the Lyapunov type stability criteria for the resulting discrete time Markov decision process (see Section \ref{contMDP}). E.g. let $C\subset \mathds{X}$ be a compact set, $b\in \mathds{R}$, $\epsilon >0$, and $V:\mathds{X}\to \mathds{R}_+$ (e.g. $V(x)=|x|$), if the following is satisfied for all $x\in\mathds{X}$:
\begin{align*}
\int_{\mathds{X}}V(y)\mathcal{T}^\gamma_h(dy|x)=E[V(x_{t+1})|x_t=x]\leq V(x)-\epsilon+b\mathds{1}_{\{x\in C\}}
\end{align*}
then the process is positive Harris recurrent and thus admits a unique stationary measure (see \cite{MeynBook}), where $\mathcal{T}_h^\gamma$ is the transition kernel when we use $h$-rate time discretization under the piece-wise constant exploration policy $\gamma$.

The third assumption (iii) is a usual requirement for reinforcement learning algorithms.
\end{remark}

\begin{remark}
The stability assumption (ii), can be replaced with so called \it{replay buffer} (as in \cite{carvalho2020new}) such that if for every $k$, $X(k\times h)\sim\pi$, for some $\pi\in\P(\mathds{X})$, the iterations (\ref{QPOMDP2}) will converge. In particular, $\pi$ will take the role of the stationary distribution under the exploration policy.
\end{remark}

\begin{proposition}\label{Q_conv}
Under Assumption \ref{learning_assmp}, the iterations (\ref{QPOMDP2}) converge almost surely to some $Q_h^*:\mathds{X}_h\times\mathds{U}_h\to \mathds{R}$, which satisfies for every $\hat{x},\hat{u}\in \mathds{X}_h\times\mathds{U}_h$
\begin{align*}
Q_h^*(\hat{x},\hat{u})=C_h^*(\hat{x},\hat{u})+\beta_h\sum_{\hat{x}_1\in\mathds{X}_h}\min_{\hat{u}_1\in\mathds{U}_h}Q_h^*(\hat{x}_1,\hat{u}_1)P_h^*(\hat{x}_1|\hat{x},u)
\end{align*}
where $C_h^*$ and $P_h^*$ are defined in (\ref{finite_cost}).
\end{proposition}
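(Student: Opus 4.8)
The plan is to recognize (\ref{QPOMDP2}) as an asynchronous stochastic approximation iteration for the fixed point of the Bellman operator of the finite MDP of Section \ref{finiteMDP}, and to invoke the standard convergence theory for such schemes (in the spirit of Tsitsiklis and Jaakkola--Jordan--Singh, or the averaging arguments of \cite{kara2021qlearning}); the only genuinely new feature is the ``noise'' arising because $\{\hat X_n\}$ is \emph{not} a Markov chain on $\mathds{X}_h$. First I would fix notation: let $\mathcal{F}_k$ be the $\sigma$-algebra generated by $\{X_0,\dots,X_k,u_0,\dots,u_k\}$, let $n_k(\hat x,\hat u)=\sum_{j=0}^k\mathbf{1}\{\hat X_j=\hat x,\,u_j=\hat u\}$, so that by Assumption \ref{learning_assmp}(i) the effective step size at the $m$th update of $(\hat x,\hat u)$ is $\sim 1/m$, and define the finite-MDP Bellman operator $(TQ)(\hat x,\hat u):=C_h^*(\hat x,\hat u)+\beta_h\sum_{\hat x_1}\min_v Q(\hat x_1,v)\,P_h^*(\hat x_1\mid\hat x,\hat u)$. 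Since $\beta_h=e^{-\beta h}<1$, $T$ is a $\beta_h$-contraction in the sup-norm and has a unique fixed point $Q_h^*$, which is exactly the object in the statement. On the event $\{\hat X_k=\hat x\}$, the target in (\ref{QPOMDP2}), $r_k:=c(X_k,\hat u)\,h+\beta_h\min_v Q_k(\hat X_{k+1},v)$, I would split as $r_k=(TQ_k)(\hat x,\hat u)+M_k+b_k$ with $M_k:=r_k-\mathbb{E}[r_k\mid\mathcal F_k]$ a martingale difference and $b_k:=\mathbb{E}[r_k\mid\mathcal F_k]-(TQ_k)(\hat x,\hat u)$ a ``Markov bias'' term.

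Using $\sup_{x,u}|c(x,u)|\le C$ from Assumption \ref{diff_assmp} together with an a priori sup-norm bound $\sup_k\|Q_k\|_\infty<\infty$ (which follows by induction from the contractive form of the update: each new iterate is a convex combination of the old one and a quantity bounded by $Ch+\beta_h\|Q_k\|_\infty$), one gets that $M_k$ has zero conditional mean and uniformly bounded conditional second moment, which is the noise hypothesis needed for the convergence theorem. Establishing this a priori boundedness first is a minor but necessary preliminary.

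The heart of the matter is showing that the bias $b_k$ is averaged away by the harmonic step sizes. Conditioned on $\mathcal F_k\cap\{\hat X_k=\hat x_i\}$, the law of $X_k$ is the (time-$kh$, conditioned-on-$B_i$) law of the exploration-driven sampled diffusion, so $\mathbb{E}[r_k\mid\mathcal F_k]$ equals the integrand $c(x,\hat u)h+\beta_h\sum_{\hat x_1}\min_v Q_k(\hat x_1,v)\,\mathcal{T}_h(B_1\mid x,\hat u)$ integrated against that conditional law, whereas $(TQ_k)(\hat x_i,\hat u)$ is the same integrand integrated against $\hat\pi^*_{\hat x_i}$. Assumption \ref{learning_assmp}(ii)--(iii) guarantees that the exploration-driven sampled chain converges to its unique invariant measure and visits every $(\hat x_i,\hat u)$ infinitely often; taking the weighing measure $\pi^*$ of Section \ref{finiteMDP} to be that invariant measure (so $\hat\pi^*_{\hat x_i}$ is its normalized restriction to $B_i$), the ergodic theorem applied along the visit times to $(\hat x_i,\hat u)$ shows the Ces\`aro averages $\tfrac1n\sum_{j\le n}b_{k_j}$ vanish. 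Plugging the decomposition into the asynchronous stochastic approximation theorem --- whose hypotheses ($\sum_m 1/m=\infty$, $\sum_m 1/m^2<\infty$ along the visits; $T$ a sup-norm contraction; bounded-variance martingale noise; asymptotically negligible averaged bias) are now all verified --- yields $Q_k\to Q_h^*$ almost surely, and $Q_h^*=TQ_h^*$ is precisely the claimed fixed-point equation.

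I expect the main obstacle to be the rigorous control of $b_k$: since $\{\hat X_n\}$ is not Markov, $b_k$ does not vanish step by step but only on average, and pushing the ergodic-averaging estimate through the asynchronous, randomly-indexed iteration requires handling the coupling between the current iterate $Q_k$ (which enters $b_k$ via $\min_v Q_k$) and the state trajectory --- e.g.\ via a Poisson-equation / averaged-ODE argument as in Borkar's treatment of stochastic approximation with Markov noise, or via the direct truncation-and-averaging estimates of \cite{kara2021qlearning}. One should also take care that the convergence of the conditional within-bin laws is strong enough to pass to the bounded-measurable integrands appearing here, for which the weak continuity established in Lemma \ref{kernel_x} and Lemma \ref{kernel_u} and the boundedness of $Q_k$ are the relevant inputs.
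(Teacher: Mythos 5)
Your overall plan is the same as the paper's: view (\ref{QPOMDP2}) as an asynchronous stochastic approximation for the fixed point of the finite-MDP operator $T$, exploit the $1/n$ step sizes along the visit times to each $(\hat{x},\hat{u})$, and use Assumption \ref{learning_assmp}(ii)--(iii) with the weighing measure $\pi^*$ identified as the invariant measure of the exploration-driven process, so that within-bin empirical averages converge to $C_h^*$ and $P_h^*$. The gap is exactly at the point you yourself flag as ``the main obstacle'': with your decomposition $r_k=(TQ_k)(\hat{x},\hat{u})+M_k+b_k$, the bias $b_k$ contains $\min_v Q_k(\cdot,v)$, i.e.\ a trajectory-adapted integrand that changes with $k$, and ``the ergodic theorem applied along the visit times'' does not directly give $\tfrac1n\sum_j b_{k_j}\to 0$ for such an iterate-dependent quantity; you would indeed need a Poisson-equation/Markov-noise (Borkar-type) argument, which moreover would have to be run on the underlying continuous-state chain since $\{\hat{X}_n\}$ itself is not Markov, and you do not carry this out. (A minor related imprecision: conditioning on $\mathcal{F}_k$ fixes $X_k$, so $\mathbb{E}[r_k\mid\mathcal{F}_k]$ is the integrand evaluated at the realized $X_k$, not an integral against a within-bin conditional law; the bias is genuinely pathwise and only dies in Ces\`aro average.)

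The paper closes this step by centering the bias at the \emph{fixed point} rather than at the current iterate. Writing $\Delta_k=Q_k-Q_h^*=\delta_k+u_k+v_k$, the only component requiring ergodic averaging is $v_k$, driven by $r_k^*$, which is built from the fixed function $V^*=\min_v Q_h^*(\cdot,v)$ and the realized $(\hat{X}_1,C_h)$; because the integrand no longer depends on $k$, the visit-time averages converge to the $\hat{\pi}^*$-integrals exactly as in your $\pi^*$-identification, giving $v_k\to 0$ almost surely. The iterate-dependent residual is $F_k-r_k^*=\beta_h\bigl(V_k(\hat{X}_1)-V^*(\hat{X}_1)\bigr)$, which is bounded by $\beta_h\|\Delta_k\|_\infty\le \beta_h\|\delta_k+u_k\|_\infty+\beta_h\epsilon$ once $\|v_k\|_\infty<\epsilon$, and the combined process $\delta_k+u_k$ is then driven into, and kept inside, an $O(\epsilon)$ ball by a pure sup-norm contraction argument using \cite[Lemma 3]{jaakkola1994convergence}; letting $\epsilon\to 0$ finishes the proof. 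This ``freeze $V^*$ in the averaged term, absorb the $Q_k$-dependence into the contraction'' decomposition is the content of \cite[Theorem 3.1]{kara2021qlearning}, which the paper invokes, and it makes the averaged-ODE/Poisson-equation machinery unnecessary. Your preliminary steps (a priori boundedness of $Q_k$, the martingale-difference part, the choice of $\pi^*$) are correct and match the paper; what is missing is this decomposition, without which your key averaging claim is unjustified as stated.
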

\begin{proof}
The proof can be found in Appendix \ref{Q_conv_proof}.
\end{proof}

Once, $Q_h^*$ is obtained, one can construct the policies such that 
\[\gamma_h(\hat{x})=\argmin_{\hat{u}}Q_h^*(\hat{x},\hat{u})\]
Using these policies, following control processes are defined
\begin{align}\label{learned_control}
u_h(t)=\gamma_h\left(\phi_\mathds{X}(X(i\times h)\right), \text{ for } t\in [i\times h, (i+1)\times h).
\end{align}
Hence, $u_h$ is a piece-wise constant process, which changes value at the sampling instances according to the learned map $\gamma_h$.

The the following result is a direct implication of Theorem \ref{main_thm1}, and Theorem \ref{Q_conv}:
\begin{theorem}\label{main_thm}
Under Assumption \ref{diff_assmp}, and Assumption \ref{learning_assmp}, iterations in (\ref{QPOMDP2}) converge to some $Q_h^*$. For the learned control process $u_h$ (see (\ref{learned_control})), we have
\begin{align*}
W_\beta(x_0,u_h)-W_\beta^*(x_0)&\leq e(h,L_\mathds{X},L_\mathds{U})
\end{align*}
where $e(h,L_\mathds{X},L_\mathds{U})$ is defined in (\ref{e_bound}), and where $u_h$ is the control policy learned by using the approximate Q-learning algorithm (\ref{QPOMDP}).

Furthermore, for small $h$, we have that 
\begin{align*}
e(h,L_\mathds{X},L_\mathds{U})\leq  C\left(\sqrt{h}+\frac{L_\mathds{X}+L_\mathds{U}}{h}+h^{\frac{1}{4}}\right)
\end{align*}
for some $C<\infty$.
\end{theorem}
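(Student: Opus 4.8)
The plan is to combine the two ingredients already in place. \textbf{Step 1: convergence and optimality of the learned policy.} By Proposition~\ref{Q_conv}, under Assumption~\ref{learning_assmp} the iterates $Q_k$ in~(\ref{QPOMDP2}) converge almost surely to a function $Q_h^*$ satisfying the fixed point equation of the finite MDP built in Section~\ref{finiteMDP}. Consequently the greedy map $\gamma_h(\hat x)=\argmin_{\hat u\in\mathds{U}_h}Q_h^*(\hat x,\hat u)$ is an optimal stationary policy for that finite MDP, i.e.\ it solves~(\ref{finitepolicy}); this is exactly the standing hypothesis under which Theorem~\ref{main_thm1} was stated (recall that in Section~\ref{apprx} we assumed $\gamma_h$ solves~(\ref{finitepolicy})).

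\textbf{Step 2: transfer of the error bound.} Now invoke Theorem~\ref{main_thm1} with this $\gamma_h$. The control process $u_h$ of~(\ref{learned_control}) is precisely the piece-wise constant process induced at the sampling instants by the optimal finite-MDP policy $\gamma_h$, so Theorem~\ref{main_thm1} applies verbatim and yields $W_\beta(x_0,u_h)-W_\beta^*(x_0)\le e(h,L_\mathds{X},L_\mathds{U})$ with $e$ as in~(\ref{e_bound}). This proves the first assertion with no further work.

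\textbf{Step 3: small-$h$ asymptotics.} It remains to estimate $e(h,L_\mathds{X},L_\mathds{U})$ for small $h$. I would Taylor-expand each of the four groups of terms in~(\ref{e_bound}) about $h=0$. For the first line, $1-e^{-\beta h}=\beta h+O(h^2)$, so $\tfrac{KBh}{1-e^{-\beta h}}$ stays bounded and the line equals $O(h)+O(\sqrt h)=O(\sqrt h)$, the $\sqrt h$ coming from the $\sqrt{2h/\pi}$ factor. The last term $Nh^{1/4}$ is already of the advertised shape. For the $L_\mathds{U}$ and $L_\mathds{X}$ coefficients, both numerator and denominator vanish as $h\to0$, so one must keep leading orders: since $\beta>K+\tfrac{K^2}{2}$, $1-e^{h(K+\frac{K^2}{2}-\beta)}=(\beta-K-\tfrac{K^2}{2})\,h+O(h^2)$ is positive and the denominator is $\beta(\beta-K-\tfrac{K^2}{2})\,h^2+O(h^3)$; the numerator of the $L_\mathds{U}$ coefficient is $2K^2 h+O(h^2)$ and that of the $L_\mathds{X}$ coefficient is $Kh$, so each coefficient is $O(1/h)$. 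Adding the four estimates gives $e(h,L_\mathds{X},L_\mathds{U})\le C\bigl(\sqrt h+\tfrac{L_\mathds{X}+L_\mathds{U}}{h}+h^{1/4}\bigr)$ with an explicit $C$ depending only on $B,K,\beta,N$.

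\textbf{Main obstacle.} There is no deep difficulty — the statement is essentially the concatenation of Proposition~\ref{Q_conv} and Theorem~\ref{main_thm1}. The only point that needs care is the expansion in Step~3: the coefficients of $L_\mathds{U}$ and $L_\mathds{X}$ are of the indeterminate form $0/0$ as $h\to0$, and one must expand numerator and denominator to the correct order (and use $\beta>K+\tfrac{K^2}{2}$ to keep the denominator from degenerating and to keep all coefficients positive) in order to conclude that they grow only like $1/h$ rather than faster.
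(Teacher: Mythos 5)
Your proposal is correct and follows exactly the paper's route: the paper proves Theorem \ref{main_thm} as a direct concatenation of Proposition \ref{Q_conv} (the learned $Q_h^*$ solves the finite-MDP fixed point equation, so $\gamma_h$ is optimal for the model of Section \ref{finiteMDP}) with Theorem \ref{main_thm1}, and the small-$h$ bound is the expansion already recorded in Corollary \ref{cor1}. Your Step 3 expansion, including the role of $\beta>K+\tfrac{K^2}{2}$ in keeping the $0/0$ coefficients of $L_\mathds{X}$ and $L_\mathds{U}$ at order $1/h$, is the same calculation the paper leaves implicit.
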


We now present results for the asymptotic case.

The first one states that if we first increase the quantization rate of the spaces and thus if the quantization error goes to 0, then if we take the sampling interval of the time to 0; the error bound goes to 0.
\begin{corollary}
Under Assumption \ref{diff_assmp},
\begin{align*}
\lim_{h\to 0}\lim_{L_\mathds{X},L_\mathds{U}\to 0}\left(W_\beta(x_0,u_h)-W_\beta^*(x_0)\right)&=0.
\end{align*}
\end{corollary}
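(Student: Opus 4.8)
The plan is to read the corollary straight off the error bound. By Theorem \ref{main_thm1} (equivalently Theorem \ref{main_thm}) we have, for every $x_0\in\mathds{X}$ and every choice of discretization parameters,
\begin{align*}
0\le W_\beta(x_0,u_h)-W_\beta^*(x_0)\le e(h,L_\mathds{X},L_\mathds{U}),
\end{align*}
the lower bound being immediate since $u_h$ is an admissible (piece-wise constant, hence non-anticipative) control process and $W_\beta^*$ is the infimum over admissible controls. Thus it suffices to show that the right-hand side of (\ref{e_bound}) satisfies $\lim_{h\to 0}\lim_{L_\mathds{X},L_\mathds{U}\to 0}e(h,L_\mathds{X},L_\mathds{U})=0$.

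First I would fix $h>0$ and send $L_\mathds{X},L_\mathds{U}\to 0$. In (\ref{e_bound}) the only terms depending on $L_\mathds{X},L_\mathds{U}$ are the second and third lines, and for each fixed $h>0$ their coefficients are finite constants --- here one uses, as in Proposition \ref{quant_policy}, that $\beta>K+\tfrac{K^2}{2}$, which makes $1-e^{h(K+K^2/2-\beta)}>0$. Hence these two terms vanish and
\begin{align*}
\lim_{L_\mathds{X},L_\mathds{U}\to 0}e(h,L_\mathds{X},L_\mathds{U})=2\left(hB+\frac{KBh}{1-e^{-\beta h}}\left(h+\sqrt{\frac{2h}{\pi}}\right)\right)+Nh^{\frac{1}{4}}=:\tilde{e}(h).
\end{align*}
Then I would let $h\to 0$ in $\tilde{e}(h)$: the term $hB\to 0$; since $1-e^{-\beta h}\sim\beta h$ as $h\to 0$, the factor $\frac{KBh}{1-e^{-\beta h}}$ converges to $\frac{KB}{\beta}<\infty$ while $h+\sqrt{2h/\pi}\to 0$, so the middle term tends to $0$; and $Nh^{\frac{1}{4}}\to 0$. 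Therefore $\tilde{e}(h)\to 0$, proving the claim.

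I do not expect a genuine obstacle here; the only subtlety worth recording is that the iterated limit must be taken in the stated order. If $h\to 0$ were taken first, the coefficient of $L_\mathds{X}$ in (\ref{e_bound}) would behave like $\frac{Kh}{(\beta h)((\beta-K-K^2/2)h)}=\Theta(h^{-1})\to\infty$, so the space-discretization contribution would only vanish if $L_\mathds{X},L_\mathds{U}$ were forced to $0$ fast relative to $h$ (cf. Corollary \ref{cor1}, where the bound reads $C(\sqrt h+(L_\mathds{X}+L_\mathds{U})/h+h^{1/4})$, so that a joint limit works provided $L_\mathds{X}+L_\mathds{U}=o(h)$); driving the quantization error to zero first, as in the statement, sidesteps this entirely.
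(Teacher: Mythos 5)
Your argument is correct and is exactly the intended one: the paper states this corollary without a separate proof, reading it off the bound $e(h,L_\mathds{X},L_\mathds{U})$ of Theorem \ref{main_thm1} in the same iterated-limit order (quantization errors first, then $h$). Your remark about the order of limits matching Corollary \ref{cor1}'s $(L_\mathds{X}+L_\mathds{U})/h$ term is consistent with the paper's own discussion and adds nothing problematic.
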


We now define the quantization rate and the quantization error as a function of $h$ and denote them by $L_\mathds{X}(h),L_\mathds{U}(h)$:
\begin{corollary}
Under Assumption \ref{diff_assmp}, 
\begin{align*}
\lim_{h\to 0}\left(W_\beta(x_0,u_h)-W_\beta^*(x_0)\right)&=0
\end{align*}
if $L_\mathds{X}(h),L_\mathds{U}(h)$ go to 0 at a faster rate than $h$, i.e. if $\frac{L_\mathds{X}(h)}{h},\frac{L_\mathds{U}(h)}{h}\to 0$ as $h \to 0$.
\end{corollary}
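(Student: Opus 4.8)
The plan is to obtain the statement as an immediate squeeze argument built on Theorem \ref{main_thm}. First I would record the trivial lower bound: the learned control process $u_h$ of (\ref{learned_control}) is piece-wise constant, $\mathds{U}$-valued, and non-anticipative (it depends only on the sampled state up to the current sampling instant), hence it is an admissible control for the continuous time problem (\ref{diff})--(\ref{cost}). By definition of the infimum in $W_\beta^*(x_0)=\inf_u W_\beta(x_0,u)$ we therefore have $W_\beta(x_0,u_h)-W_\beta^*(x_0)\ge 0$ for every $h$.

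Next I would invoke the asymptotic bound of Theorem \ref{main_thm}: there is a constant $C<\infty$, depending only on $\beta$ and on the constants $B,K$ of Assumption \ref{diff_assmp} (in particular \emph{not} on $h$), such that for all sufficiently small $h$
\[
0\le W_\beta(x_0,u_h)-W_\beta^*(x_0)\le C\left(\sqrt{h}+\frac{L_\mathds{X}(h)+L_\mathds{U}(h)}{h}+h^{\frac14}\right).
\]
Then I would pass to the limit $h\to 0$ term by term on the right: $\sqrt h\to 0$ and $h^{1/4}\to 0$ are immediate, while $\dfrac{L_\mathds{X}(h)+L_\mathds{U}(h)}{h}\to 0$ is exactly the hypothesis $\dfrac{L_\mathds{X}(h)}{h}\to 0$ and $\dfrac{L_\mathds{U}(h)}{h}\to 0$. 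Hence the upper bound tends to $0$, and combined with the lower bound the squeeze theorem gives $\lim_{h\to 0}\bigl(W_\beta(x_0,u_h)-W_\beta^*(x_0)\bigr)=0$.

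There is no genuine obstacle here; the one point deserving a line of care is that the constant $C$ in the displayed asymptotic bound is independent of $h$. This is already implicit in Corollary \ref{cor1}, but to be self-contained I would verify it by inspecting $e(h,L_\mathds{X},L_\mathds{U})$ in (\ref{e_bound}) as $h\to 0$: using $1-e^{-\beta h}\sim\beta h$ and, under $\beta>K+\frac{K^2}{2}$, $1-e^{h(K+\frac{K^2}{2}-\beta)}\sim(\beta-K-\tfrac{K^2}{2})h>0$, each summand of $e$ is bounded for small $h$ by a fixed multiple of $\sqrt h$, of $(L_\mathds{X}+L_\mathds{U})/h$, or of $h^{1/4}$ respectively. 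Alternatively one can bypass the reduction to the $C(\cdots)$ form entirely: substitute $L_\mathds{X}=L_\mathds{X}(h)$, $L_\mathds{U}=L_\mathds{U}(h)$ into $e(h,L_\mathds{X},L_\mathds{U})$ of (\ref{e_bound}) and observe that, under the stated rate hypothesis, every summand is a function of $h$ on $(0,\infty)$ that extends continuously to $h=0$ with value $0$, so $e(h,L_\mathds{X}(h),L_\mathds{U}(h))\to 0$ and the conclusion again follows from Theorem \ref{main_thm} together with nonnegativity.
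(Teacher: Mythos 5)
Your proposal is correct and follows essentially the same route the paper intends: the corollary is stated as an immediate consequence of Theorem \ref{main_thm} (equivalently the bound $e(h,L_\mathds{X},L_\mathds{U})$ of Theorem \ref{main_thm1} and its small-$h$ form in Corollary \ref{cor1}), and your squeeze argument, together with the observation that $u_h$ is admissible so the difference is nonnegative, is exactly how that bound yields the limit under the hypothesis $L_\mathds{X}(h)/h,\,L_\mathds{U}(h)/h\to 0$.
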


\begin{remark}
The space discretization approach we have followed so far gives us precise error bounds and a convergence analysis under general conditions. However, we can adapt Q learning algorithms with function approximation to our setting as well.
In particular, we will focus on linear approximations and discuss their convergence properties. Consider the set of Q functions that can be parametrized over the parameter $\theta$ that can be expressed as the linear span of a fixed set of M linearly independent functions $\phi_i : \mathds{X} \times \mathds{U} \to \mathds{R}$, such that the Q values can be written as 
\begin{align*}
Q_{\theta}(x,u)=\sum_{i=1}^M\phi_i(x,u)\theta(i),
\end{align*} 
then we can construct the following iterations to learn the Q values over the parametrized family:
\begin{align}\label{lin_q_1}
\theta_{k+1}=\theta_k+\alpha_k \phi(\hat{x}_k,u_k)\Delta_k
\end{align}
where $\Delta_t:=c(\hat{x}_t,u_t)+\beta_h \max_{u\in\mathds{U}}Q_t(\hat{X}_{t+1},u)-Q_t(\hat{x}_t,u_t)$, such that $\hat{x}_k$ is the sampled diffusion process. 

Then the convergence of this iterations can be shown (see \cite{melo2008analysis}). In particular, one can show that the algorithm converges if 
\begin{itemize}
\item The sampled process converges to its stationary distribution during exploration under the exploration policy,
\item The exploration policy is already close to the optimal policy (precise condition can be found in \cite{melo2008analysis}).
\end{itemize}
Note that the second assumption is quite restrictive. Furthermore, \cite{melo2008analysis} does not provide an analysis for the error analysis of the limit Q function with respect to the optimal Q values, which in our setting would be the optimal Q values of the MDP model constructed in Section 2.1 using piece-wise constant controls for the diffusion process. 

In  a recent work (\cite{carvalho2020new}), the assumptions are relaxed using `coupled Q learning' with a so called `replay buffer' assumption. If one uses the following iterations:
\begin{align*}
&u_{t+1}=u_t+\alpha_t(\phi(x_t,u_t)Q_{v_t}(x_t,u_t)-u_t),\\
&v_{t+1}=v_t+\beta_t\phi(x_t,u_t)\Delta_t
\end{align*}
where $\Delta=c(x_t,u_t)+\beta\max_{u\in\mathds{U}}Q_{u_t}(X_{t+1},u)-Q_{v_t}(x_t,u_t)$. It is then shown that these iterations converge if 
\begin{itemize}
\item $\alpha_t$ and $\beta_t$ are square summable but not summable, and $\alpha_t=o(\beta_t)$ or $\alpha_t\ll \beta_t$,
\item For all $t$, $(x_t)$ can be sampled from a fixed distribution, say  $\pi$, or so called {\it replay buffer}.
\end{itemize}
Furthermore, for the limit Q values $Q_{v^*}$, we have that
\begin{align}\label{lin_q_err}
\|Q^*-Q_{v^*}\|_\infty \leq \frac{1}{1-\beta}\|Q^*-\text{Proj}_{\phi}Q^*\|_\infty +\frac{1-\sigma}{\sigma}\frac{\beta\sigma}{(1-\beta)^2}
\end{align}
where $\sigma$ is constant that depends on the set of linear basis functions. Hence, if the the optimal Q values are in the linear span of basis functions, we might get an approximation error depending on the basis functions. Note further that the replay buffer assumption replaces the stationarity assumption. However, existence of such a setup might be hard to find, i.e. one may not be able to start the process from a desired distribution.

In summary, both of these Q learning with linear approximations algorithms can be used by discretizing the time for diffusion processes. Under somehow restrictive assumptions, convergence can also be shown. Furthermore, the approximation error will be in the order of $h^{1/4}$ (piece-wise constant policy approximation error) plus the error presented in (\ref{lin_q_err}). However, both of these results are still not fully conclusive, as they do not analyze the performance of the learned policies but only focus on the difference between the limit Q values and the optimal Q values.
\end{remark}

\section{A Discussion on the Convergence Rate of the Learning and the Effect of Sampling and Quantization Rates on the Learning Speed}\label{sample}
For approximation accuracy, finer sampling intervals and higher quantization rates lead to smaller error bounds. However, it is clear that finer sampling intervals and higher quantization rates also result in slower learning. In particular, higher quantization rate of the state and action spaces results in larger aggregate state and action spaces which in turn leads to dimension issues for the learning. Furthermore, finer sampling intervals increases the discount factor $\beta_h$ of the approximate Q learning algorithm in (\ref{QPOMDP2}), and higher discount rates make the iterations in (\ref{QPOMDP2}) to converge at a slower rate by increasing the effective horizon of the problem. 

First, we note that the algorithm presented in this paper relies on the convergence of the state process to its stationary distribution, since the state aggregation results in non-Markovian dynamics. Hence, the convergence speed of the algorithm depends on the convergence to the invariant measure of the process. Our motivation in this section is to study the effect of state and time discretization parameters on the speed of the Q learning algorithm. However, these discretization parameters do not affect the speed of the convergence to the stationary distribution, as this is related to the dynamics of the underlying diffusion process. Thus, for a simpler presentation, we will assume the state process starts from its stationary distribution under the exploration policy and always stays there during the exploration. 


The following well known result (\cite[Theorem 5]{even2003learning}) is stated using the notation of this paper. The result provides a sample complexity bound for the near optimal Q estimates when the learning rate is linear, i.e. $\alpha_k=\frac{1}{k}$, as in this paper.

\begin{proposition}[\cite{even2003learning}]\label{even_thm}
Let $Q_T$ be the value of the Q-learning algorithm using linear learning rate at time $T$. Then with probability at least $1-\delta$, for any positive constant $\psi$ we have $\|Q_T-Q_h^*\|\leq \epsilon$, given that 
\begin{align}\label{bound1}
T=\Omega\left((L+\psi L+1)^{\frac{2}{1-\beta_h}\text{ln}\frac{V_{\text{max}}}{\epsilon}}\frac{V_{\text{max}}^2\text{ln}(\frac{|\mathds{X}_h||\mathds{U}_h|V_{\text{max}}}{\delta\epsilon \psi (1-\beta_h)})}{(\psi \epsilon (1-\beta_h))^2}\right),
\end{align}
where $V_{\text{max}}:=\|\hat{J}_{\beta_h}\|_\infty\leq \frac{h\|c\|_\infty }{1-\beta_h}$, and $L$ is the covering time for the algorithm, that is the smallest time for every state and action pair to be visited at least once. 
\end{proposition}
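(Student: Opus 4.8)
The statement is Theorem~5 of \cite{even2003learning} transcribed into the notation of this paper, so my plan is essentially one of verification: I would check that the recursion (\ref{QPOMDP2}) is an instance of the asynchronous Q-learning scheme analyzed there, and then compute the model-dependent constant $V_{\text{max}}$.

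First I would invoke the standing assumption of this section --- that the diffusion is started from, and stays in, its invariant measure under the stationary randomized exploration control --- to observe that the aggregated process $\hat{X}_n$ together with the exploration actions forms a time-homogeneous Markov chain, whose one-step transition law and one-step expected cost are exactly $P_h^*$ and $C_h^*$ of Section \ref{finiteMDP}, defined in (\ref{finite_cost}). Thus (\ref{QPOMDP2}) is precisely the asynchronous Q-learning recursion for the finite discounted MDP $(\mathds{X}_h,\mathds{U}_h,P_h^*,C_h^*,\beta_h)$, whose unique fixed point is the $Q_h^*$ identified in Proposition \ref{Q_conv}; this is the framework of \cite{even2003learning}.

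Next I would verify the hypotheses of \cite[Theorem 5]{even2003learning}. The per-step costs are bounded, since $0\le c_h(x,u)=c(x,u)\times h\le \|c\|_\infty\, h$ by Assumption \ref{diff_assmp}; the learning rate is the linear rule $\alpha_k=1/k$ that the theorem requires, by Assumption \ref{learning_assmp}(i); and the covering time $L$, the first time every pair in $\mathds{X}_h\times\mathds{U}_h$ has been visited at least once, is almost surely finite, by Assumption \ref{learning_assmp}(iii) together with the positivity of the exploration distribution on $\mathds{U}_h$ and irreducibility of the stationary chain. With these in hand, \cite[Theorem 5]{even2003learning} applies verbatim and yields (\ref{bound1}) with discount factor $\beta_h=e^{-\beta h}$ and value bound $V_{\text{max}}=\|\hat{J}_{\beta_h}\|_\infty$. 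It then remains only to bound $V_{\text{max}}$: since $C_h^*(\hat{x}_i,u)=h\int_{B_i}c(x,u)\,\hat{\pi}_{\hat{x}_i}^*(dx)\in[0,h\|c\|_\infty]$, every policy of the finite MDP incurs discounted cost at most $\sum_{k=0}^\infty\beta_h^k\, h\|c\|_\infty=\frac{h\|c\|_\infty}{1-\beta_h}$ from every initial state, and hence $\hat{J}_{\beta_h}$ satisfies the same bound.

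The only genuine subtlety --- and the reason the stationarity assumption is imposed in this section rather than merely invoked for a ``simpler presentation'' --- is that state aggregation makes $\hat{X}_n$ non-Markovian under a general initial law; starting in the invariant measure is precisely what restores the Markov structure needed to place (\ref{QPOMDP2}) inside the framework of \cite{even2003learning}. Without that assumption one would have to absorb the mixing time of the diffusion into the covering-time term $L$, which I would flag as a remark rather than carry out here. Past this point the argument is pure bookkeeping, so I expect no further obstacle.
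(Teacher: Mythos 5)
The paper does not actually prove Proposition \ref{even_thm}: it is stated as a direct citation of \cite[Theorem 5]{even2003learning}, transcribed into the present notation under the section's simplifying assumption that the exploration process is stationary, together with the elementary bound $V_{\text{max}}=\|\hat{J}_{\beta_h}\|_\infty\leq h\|c\|_\infty/(1-\beta_h)$. Your proposal takes the same route --- invoke the cited theorem, check bounded costs, the linear learning rate of Assumption \ref{learning_assmp}, almost-sure finiteness of the covering time, and compute $V_{\text{max}}$ --- and the $V_{\text{max}}$ computation is correct and is exactly the bound the paper records.

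The one substantive overstatement is your claim that starting the diffusion in its invariant measure makes $(\hat{X}_n,\hat{u}_n)$ a time-homogeneous Markov chain with kernel $P_h^*$ and cost $C_h^*$, so that \cite[Theorem 5]{even2003learning} applies ``verbatim.'' Quantizing a Markov chain does not produce a Markov chain, even in stationarity: conditionally on $\hat{X}_k=\hat{x}_i$ alone the underlying state is distributed as $\hat{\pi}^*_{\hat{x}_i}$, but conditioning additionally on $\hat{X}_{k-1},\hat{u}_{k-1},\dots$ changes the within-bin distribution, so the one-step law of $\hat{X}_{k+1}$ given the full past is in general not $P_h^*(\cdot|\hat{x}_k,\hat{u}_k)$; the noise in the update (\ref{QPOMDP2}) is therefore not a martingale difference with respect to the natural filtration, which is the structure the cited sample-complexity analysis presumes. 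This is precisely why the paper's own convergence argument (Proposition \ref{Q_conv}) does not invoke Markovianity of the aggregated process but instead uses ergodic averaging along the visit times of each pair $(\hat{x},\hat{u})$, and why the paper stresses elsewhere that state aggregation ``results in non-Markovian dynamics.'' Strictly speaking, then, the cited theorem applies to the idealized finite MDP $(\mathds{X}_h,\mathds{U}_h,P_h^*,C_h^*,\beta_h)$ rather than to the observed aggregated process; the paper accepts this idealization because Section \ref{sample} is an explicitly heuristic discussion of learning speed. Your verification would be airtight only for trivial quantization (each bin a single atom), or with a version of the bound robust to the aggregation-induced, non-martingale perturbation; stationarity alone does not ``restore the Markov structure,'' it only guarantees that the empirical one-step averages converge to $P_h^*$ and $C_h^*$ as in (\ref{finite_cost}).
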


The above result can be simplified for small enough $h$. We can write that 
\begin{align}\label{samp}
T \lesssim  \left(\frac{1}{\epsilon}\right)^{\frac{\text{ln}(L+\psi L+1)}{h}}\frac{\text{ln}(\frac{|\mathds{X}_h||\mathds{U}_h|}{\delta \epsilon \psi h})}{\epsilon^2h^2},
\end{align}
where we use $\lesssim$ as we drop some constant and logarithmic dependence. 

From (\ref{samp}), we observe that decreasing the time discretization parameter $h$, increases the sample complexity exponentially. Note that, as the required sample size increases exponentially, the duration we need to observe the diffusion process in real time to get $\epsilon$-near estimates also increases, since the exponential increase in the sample complexity dominates the decrease rate on $h$.

For the effect of the space discretization, we can see that the sample complexity increases in a logarithmic way with the second term, but the dominant effect is caused by the increase on the cover time $L$, as we have that $L\geq |\mathds{X}_h|\times |\mathds{U}_h|$, and depending on the sampling frequency of the samples, the cover time can be even greater. Nonetheless, the cover time increases at least linearly with the increase on the size of the aggregate state and action spaces, which in turn increases the sample complexity at a polynomial rate depending on the parameter $h$.

Figure \ref{t_eps_h} shows the change on the required time when for $h\in[0.7,0.9]$, $\epsilon\in[0.1,0.5]$ and when $|\mathds{X}_h|$ and $|\mathds{U}_h|$ are assumed to be order of $\frac{1}{h}$.

\begin{figure}[h!]
\begin{center}
\includegraphics[scale=0.6]{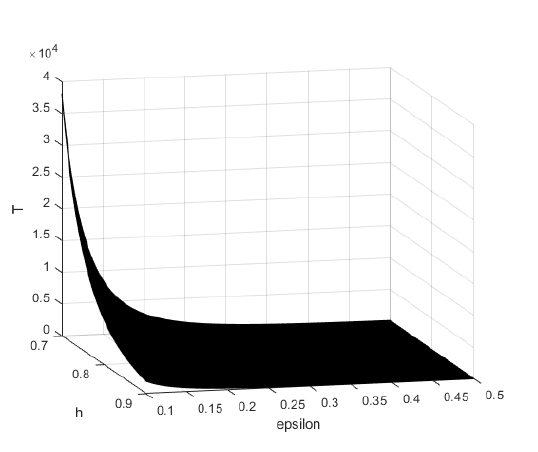}
\caption{Sample complexity for different values of error and sampling parameters }
 \label{t_eps_h}
\end{center}
\end{figure}
However, this is the sample complexity to achieve $\epsilon$-near estimates of $Q_h^*$ which is also an estimate of the true value function (see Theorem \ref{main_thm} and Corollary \ref{cor1}).  Hence, one needs to pick the sampling parameter $h$ in a careful way considering the trade-off between the convergence speed and the approximation accuracy. Following Figure \ref{error_bounds_T} provides an example for the error bounds given a given level sample points $T\sim 9500$. The graph on the left shows the difference between $Q_T$ and $Q_h^*$ (see (\ref{QPOMDP2})), clearly the error for the learning of the approximate model decays as $h$ increases for a fixed level of sample points since the learned model becomes simpler as $h$ increases.  The graph on the right shows represents the upper bound on the difference
\begin{align*}
|Q_T-Q^*|\leq |Q_T-Q_h^*|+|Q_h^*-Q^*|,
\end{align*}
with proper scaling. Note that the first term is the distance from the approximate Q value, whereas the second term is the approximation error. We can see that after a certain value, increasing $h$ results on greater total error for these specific parameter intervals.
\begin{figure}[h!]
\begin{center}
\includegraphics[scale=0.35]{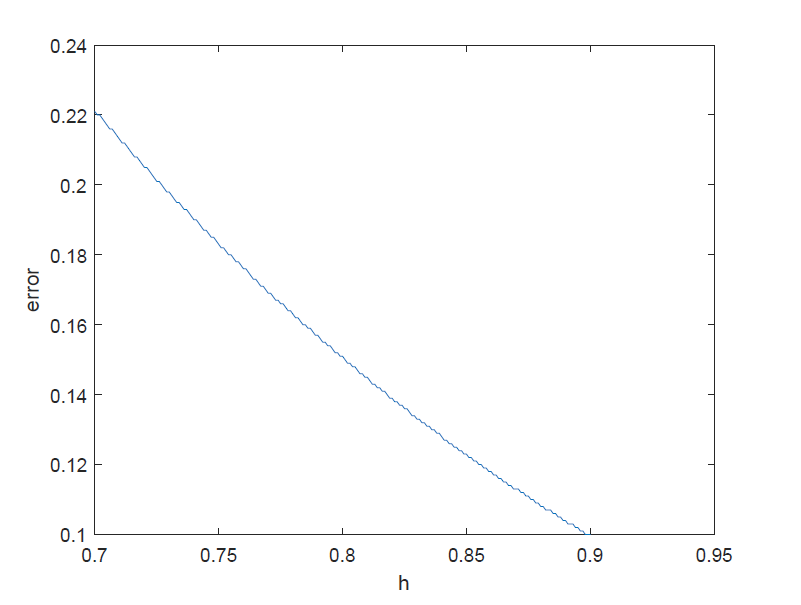}\includegraphics[scale=0.35]{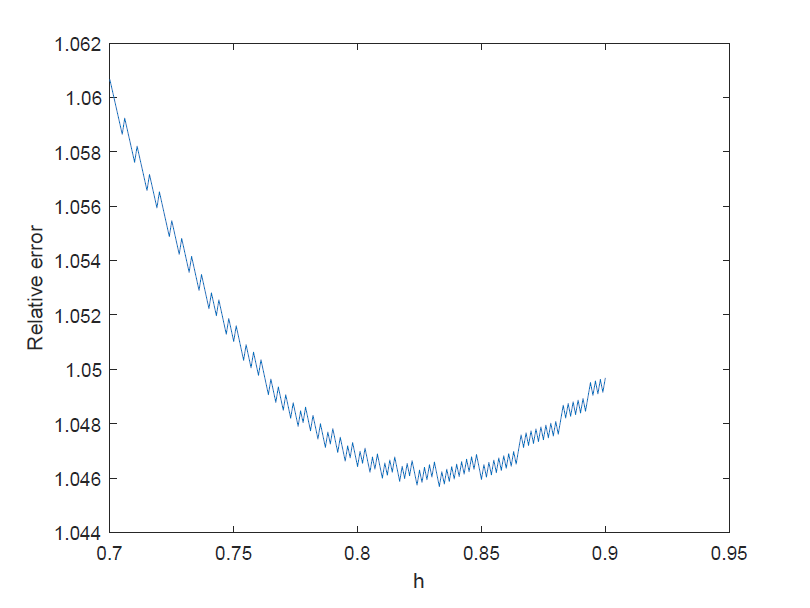}
\caption{$|Q_T^h-Q_h^*|$ and $|Q_T^h-Q^*|$ for different values of $h$ when $T\sim 9500$.}
  \label{error_bounds_T}
\end{center}
\end{figure}

The sample complexity bound presented in (\ref{bound1}) reveals that the increase is exponential in $\frac{1}{1-\beta_h}$, this rate is  clearly not desired and it turns out that it can be avoided using different learning rates, $\alpha_k$, rather than using linear learning rates. For example, using polynomial learning rates, $\alpha_k=\frac{1}{k^\omega}$ for some $\omega\in(1/2,1)$, one can achieve following sample complexity for $\epsilon$-near estimates for small enough $h$ (see \cite[Theorem 4]{even2003learning}):
\begin{align*}
T\lesssim \left(\frac{L^{1+3\omega}\text{ln}(|\mathds{X}_h||\mathds{U}_h|)}{h^2\epsilon^2}\right)^{\frac{1}{\omega}}+\left(\frac{L}{h}\text{ln}(\frac{1}{\epsilon})\right)^{\frac{1}{1-\omega}},
\end{align*}
where the exponential increase with respect to $h$ is eliminated.

Further improvements and variations can be achieved for the sample complexity and the convergence rate, using different learning rates, e.g. rescaled learning rates, carefully chosen constant learning rates, or with different variations of the Q learning algorithm such as speedy Q learning, or variance reduced Q learning (see e.g. \cite{Azar2011SpeedyQ,wainwright2019stochastic,li2020sample}).

We note that, even though different convergence rates can be derived using different reinforcement learning algorithms, if the algorithms are constructed using the time and state discretization procedure as in this paper, the learned value function will be the value function of the approximate MDP model constructed in Section \ref{finiteMDP}. Hence, the convergence speed can be improved with different learning rates or different Q learning variants, however,
\begin{align*}
\sqrt{h}+\frac{L_{\mathds{X}}+L_{\mathds{U}}}{h}+h^{\frac{1}{4}}
\end{align*}
which is the error upper-bound for the approximation via state and time discretization and piece-wise constant polices, will not change.

\section{Conclusion}
We have constructed an approximate Q learning algorithm for a controlled diffusion process through discretization in time and space. We have showed that this algorithm converges under an ergodicity assumption on the state process. Furthermore, we have showed that the limit Q values satisfy the optimality equation of a finite Markov decision process, which has the same distribution as the diffusion process at the sampling points when the diffusion process is controlled with a piece-wise constant control process. Using these observations, we have derived upper bounds, as a function of the discretization parameters, for the approximation error of the learned policies compared to the performance of the optimal admissible control process. 

Possible future directions, building on the analysis in this paper are as follows:
\begin{itemize}
\item When we discretize the state space, the aggregated state process is no longer a Markov process, hence, we use an ergodicity assumption, to guarantee the convergence of the Q learning algorithm. However, if the quantization is fine enough, one might expect the learning algorithm to stay in a set of values with sufficiently small variations even without the ergodicity assumption. Hence, a possible future problem is to relax the ergodicity assumption we consider here.

\item In this paper, we have used the traditional Q learning algorithm, for the simplicity of the presentation. For faster learning rates, different variations such as  the variance reduction techniques, can be considered. We note that, if one uses the same time and space quantization scheme, the learned value functions will be the same as in this paper, however, using different variations of the Q learning algorithm will change the learning speed.
\item The analysis used in this paper, can be extended to partially observed systems using the results from \cite{kara2021convergence,kara2020near} under proper filter stability conditions.
\item We have not considered the exploration and exploitation trade-off; one might study this relation considering the provided convergence rate in this paper.
\end{itemize}

\newpage

\appendix

\section{Proof of Proposition \ref{time_app_thm}}\label{time_app_thm_proof}
Note that the way we constructed the controlled Markov chain in Section \ref{contMDP}, implies that the state process for the controlled Markov chain, and the state process for the diffusion process have the same distribution at the sampling instances. That is 
\begin{align*}
X_k\sim X(k\times h), \text{ for all } k\in\mathds{Z}_+.
\end{align*}
We can then write that
\begin{align*}
&\left|W_\beta(x_0,u_h)-J_{\beta_h}(x_0,\gamma)\right|=\left|E\left[\int_0^\infty e^{-\beta s}c\left(X(s),u_h(s)\right)ds\right]-E\left[\sum_{k=0}^\infty \beta_h^kc_h(X_k,U_k)\right]\right|\\
&=\left|E\left[\sum_{k=0}^\infty\int_{kh}^{(k+1)h}e^{-\beta s}c(X(s),u_h(s))ds\right]-E\left[\sum_{k=0}^\infty \beta_h^kc_h(X_k,U_k)\right]\right|\\
&\leq \sum_{k=0}^\infty\int_{kh}^{(k+1)h}E\left[\left|e^{-\beta s}c(X(s),u_h(s))-e^{-\beta h k}\frac{c_h(X_k,U_k)}{h}\right|\right]ds
\end{align*}
We know focus on the term inside:
\begin{align*}
&E\left[\left|e^{-\beta s}c(X(s),u_h(s))-e^{-\beta h k}\frac{c_h(X_k,U_k)}{h}\right|\right]\\
&\leq E\left[\left|e^{-\beta s}c(X(s),u_h(s))-e^{-\beta h k}c(X(s),u_h(s))\right|\right]+E\left[\left|e^{-\beta h k}c(X(s),u_h(s))-e^{-\beta h k}c(X_k,U_k)\right|\right]\\
&\leq e^{-\beta h k}(1- e^{-\beta h})\|c\|_\infty+ e^{-\beta h k} K E\left[ |X(s)-X_k|\right]
\end{align*}
for the last step, we used the fact that $u_h(s)=U_k$ as it is a piece-wise constant control process. For the second term, we have that for $s\in[kh,(k+1)h)$
\begin{align*}
X(s)=X_k+\int_{kh}^{s}b(x(r),u_h(r))dr+\int_{kh}^s\sigma(x(r),u_h(r))dB(r).
\end{align*}
Thus, using Assumption \ref{diff_assmp} we can write
\begin{align*}
 E\left[ |X(s)-X_k|\right]&\leq \|b\|_\infty \int_{kh}^{s}dr+\|\sigma\|_\infty E\left[\left|\int_{kh}^s dB(r)\right|\right]\\
&\leq B h + B E[|Z_h|]=B \left(h+ \sqrt{\frac{2h}{\pi}}\right)
\end{align*}
where $Z_h$ is normally distributed with mean 0 and  variance $h$. 

By combining everything we have so far, we write
\begin{align*}
&\left|W_\beta(x_0,u_h)-J_{\beta_h}(x_0,\gamma)\right|\\
&\leq \sum_{k=0}^\infty\int_{kh}^{(k+1)h} e^{-\beta h k}(1- e^{-\beta h})\|c\|_\infty + e^{-\beta h k} K B \left(h+ \sqrt{\frac{2h}{\pi}}\right) ds\\
&=  \sum_{k=0}^\infty\left( e^{-\beta h k}(1- e^{-\beta h})\|c\|_\infty + e^{-\beta h k} K B \left(h+ \sqrt{\frac{2h}{\pi}}\right) \right) h\\
&=hB+\frac{K B h}{1-e^{-\beta h}}\left(h+\sqrt{\frac{2h}{\pi}}\right)
\end{align*}

\section{Proof of Proposition \ref{Q_conv}}\label{Q_conv_proof}
\begin{proof}

We start by writing the iterations in a more compact from by defining $C_h:=c(x(i\times h),\hat{u})\times h$, and $\hat{X}_1:=\phi_\mathds{X}\Big(X\big((i+1)\times h\big)\Big)$:
\begin{align*}
Q_{k+1}(\hat{x},\hat{u})=(1-\alpha_k(\hat{x},\hat{u}))Q_k(\hat{x},\hat{u})+\alpha_k(\hat{x},\hat{u})\left(C_h+\beta_h \min_{v\in\mathds{U}_h} Q_k\Big(\hat{X}_1,v\Big)\right)
\end{align*}

We  define 
\begin{align*}
\Delta_k(\hat{x},\hat{u})&:=Q_k(\hat{x},\hat{u})-Q_h^*(\hat{x},\hat{u})\\
F_k(\hat{x},\hat{u})&:=C_h+\beta_h V_k(\hat{X}_1)-Q_h^*(\hat{x},\hat{u})\\
\hat{F}_k(\hat{x},\hat{u})&:=C^*_h(\hat{x},\hat{u})+\beta_h\sum_{\hat{x}_1} V_k(\hat{x}_1)P_h^*(\hat{x}_1|\hat{x},\hat{u}) -Q_h^*(\hat{x},\hat{u}),
\end{align*}
where $V_k(\hat{x}):=\min_{v\in\mathds{U}_h} Q_k\Big(\hat{x},v\Big)$.

Then, we can write the following iteration
\begin{align*}
\Delta_{k+1}(\hat{x},\hat{u})=(1-\alpha_k(\hat{x},\hat{u}))\Delta_k(\hat{x},\hat{u})+\alpha_k(\hat{x},\hat{u}) F_k(\hat{x},\hat{u}).
\end{align*}
Now, we write $\Delta_k=\delta_k+w_k$ such that 
\begin{align*}
\delta_{k+1}(\hat{x},\hat{u})&=(1-\alpha_k(\hat{x},\hat{u}))\delta_k(\hat{x},\hat{u})+\alpha_k(\hat{x},\hat{u}) \hat{F}_k(\hat{x},\hat{u})\\
w_{k+1}(\hat{x},\hat{u})&=(1-\alpha_k(\hat{x},\hat{u}))w_k(\hat{x},\hat{u})+\alpha_k(\hat{x},\hat{u}) r_k(\hat{x},\hat{u})
\end{align*}
where $r_k:=F_k-\hat{F}_k=\beta V_k(\hat{X}_1)-\beta_h \sum_{\hat{x}_1}V_k(\hat{x}_1)P_h^*(\hat{x}_1|\hat{x},\hat{u}) + C_h- C_h^*(\hat{x},\hat{u})$. Next, we define
\begin{align*}
r_k^*(\hat{x},\hat{u})=\beta_h V^*(\hat{X}_1)-\beta_h \sum_{\hat{x}_1}V^*(\hat{x}_1)P_h^*(\hat{x}_1|\hat{x},\hat{u}) + C_h-C_h^*(\hat{x},\hat{u})
\end{align*}
We further separate $w_k=u_k+v_k$ such that
\begin{align*}
u_{k+1}(\hat{x},\hat{u})&=(1-\alpha_k(\hat{x},\hat{u}))u_k(\hat{x},\hat{u})+\alpha_k(\hat{x},\hat{u}) e_k(\hat{x},\hat{u})\\
v_{k+1}(\hat{x},\hat{u})&=(1-\alpha_k(\hat{x},\hat{u}))v_k(\hat{x},\hat{u})+\alpha_k(\hat{x},\hat{u}) r^*_k(\hat{x},\hat{u})
\end{align*}
where $e_k=r_k-r^*_k$. 



We now show that $v_k(\hat{x},\hat{u})\to 0$ almost surely for all $(\hat{x},\hat{u})$. Note that, because of the way we chose the learning rates $\alpha_k$, $v_k(\hat{x},\hat{u})$ is only updated when the process hits $\hat{x},\hat{u}$ during the exploration. Thus, we define the following stopping times
\begin{align*}
\tau(n+1)=\{\min k>\tau(n): \phi_\mathds{X}(x_k)=\hat{x},u_k=\hat{u}\}
\end{align*}
where $\tau(0)=0$. In what follows, we will focus on the $v_n(\hat{x},\hat{u})$ process, since we assume that every $(\hat{x},\hat{u})$ pair is visited infinitely often, these stopping times are bounded almost surely and hence, we can make sure that $n\to \infty$ as $k\to \infty$. Furthermore, we have that $\alpha_n(\hat{x},\hat{u})=\frac{1}{n}$, as $n$ is the number of times the we have hit $(\hat{x},\hat{u})$ pair.

When $\alpha_n(\hat{x},\hat{u})=\frac{1}{n}$ for every $(\hat{x},\hat{u})$ pair, the problem reduces to
\begin{align*}
v_{n+1}(\hat{x},\hat{u})&=\frac{1}{n}\sum_{n'=0}^{n-1} r^*_{n'}(\hat{x},\hat{u}).
\end{align*}

Above, even though, we do not write the time dependence on $\hat{X}_1$, distribution of the $\hat{X}_1$ is different every time we make the update, since the marginal distribution of $x_t$ is different. However, using the fact that the random variables $(\hat{X}_1,X_1)$ form a controlled Markov chain since they are sampled at the stopping times $\tau(k)$, and using the invariant measure of the original state process,  we can write 
\begin{align*}
&\lim_{n\to\infty}\frac{1}{n}\sum_{n'=0}^{n-1}V^*(\hat{X}_1)=\int_{B}\sum_{j}V^*(\hat{x}_j)\mathcal{T}_h(B_j|x,\hat{u})\hat{\pi}_{\hat{x}}^*(dx)\\
&\lim_{n\to\infty}\frac{1}{n}\sum_{n'=0}^{n-1}C_h=\lim_{n\to\infty}\frac{1}{n}\sum_{n'=0}^{n-1}c(x(\tau(n')\times h),\hat{u})\times h=\int_{B}c_h(x,u)\hat{\pi}_{\hat{x}}^*(dx).
\end{align*} 
where $B$ is the quantization bin $\hat{x}$ belongs to. Hence, we have proved that $v_k(\hat{x},\hat{u})\to 0$, noting
\begin{align*}
&C_h^*(\hat{x}_i,u)=\int_{B_i}c_h(x,u)\hat{\pi}_{\hat{x}_i}^*(dx)\nonumber\\
&P_h^*(\hat{x}_j|\hat{x}_i,u)=\int_{B_i}\mathcal{T}_h(B_j|x,u)\hat{\pi}_{\hat{x}_i}^*(dx).
\end{align*}

Now, we go back to the iterations:
\begin{align*}
\delta_{k+1}(\hat{x},\hat{u})&=(1-\alpha_k(\hat{x},\hat{u}))\delta_k(\hat{x},\hat{u})+\alpha_k(\hat{x},\hat{u}) \hat{F}_k(\hat{x},\hat{u})\\
u_{k+1}(\hat{x},\hat{u})&=(1-\alpha_k(\hat{x},\hat{u}))u_k(\hat{x},\hat{u})+\alpha_k(\hat{x},\hat{u}) e_k(\hat{x},\hat{u})\\
v_{k+1}(\hat{x},\hat{u})&=(1-\alpha_k(\hat{x},\hat{u}))v_k(\hat{x},\hat{u})+\alpha_k(\hat{x},\hat{u}) r^*_k(\hat{x},\hat{u}).
\end{align*}
Note that, we want to show $\Delta_k=\delta_k+u_k+v_k \to 0$ almost surely and we have that $v_k(\hat{x},\hat{u})\to 0$ almost surely for all $(\hat{x},\hat{u})$. The following analysis holds for any path that belongs to the probability one event in which $v_k(I,u)\to 0$. For any such path and for any given $\epsilon>0$, we can find an $N<\infty$ such that $\|v_k\|_\infty<\epsilon$ for all $k>N$ as $(I,u)$ takes values from a finite set.

We now focus on the term $\delta_k + u_k$ for $k>N$:
\begin{align}\label{sum_proc}
(\delta_{k+1}+u_{k+1})(\hat{x},\hat{u})&=(1-\alpha_k(\hat{x},\hat{u}))(\delta_k+u_k)(\hat{x},\hat{u})+\alpha_k(\hat{x},\hat{u}) (\hat{F}_k+e_k)(\hat{x},\hat{u}).
\end{align}
Observe that  for $k>N$,
\begin{align*}
(\hat{F}_k+e_k)(\hat{x},\hat{u})=&(F_k-r_k^*)(\hat{x},\hat{u})=\beta_h V_k(\hat{X}_1)-\beta_h V^*(\hat{X}_1)\\
&\leq \beta_h\max_{\hat{x},\hat{u}}|Q_k(\hat{x},\hat{u})-Q^*(\hat{x},\hat{u})|=\beta_h\|\Delta_k\|_\infty\\
&\leq \beta_h \|\delta_k+u_k\|_\infty+\beta_h \epsilon
\end{align*}
where the last step follows from the fact that $v_k\to 0$ almost surely. By choosing $C<\infty$ such that $\hat{\beta}:=\beta_h(C+1)/C<1$, for $\|\delta_k+u_k\|_\infty>C\epsilon$, we can write that
\begin{align*}
 \beta_h \|\delta_k+u_k+\epsilon\|_\infty\leq \hat{\beta}\|\delta_k+u_k\|_\infty.
\end{align*}
Now we rewrite (\ref{sum_proc})
\begin{align*}
(\delta_{k+1}+u_{k+1})(\hat{x},\hat{u})&=(1-\alpha_k(\hat{x},\hat{u}))(\delta_k+u_k)(\hat{x},\hat{u})+\alpha_k(\hat{x},\hat{u}) (\hat{F}_k+e_k)(\hat{x},\hat{u})\\
&\leq (1-\alpha_k(\hat{x},\hat{u}))(\delta_k+u_k)(\hat{x},\hat{u})+\alpha_k(\hat{x},\hat{u})  \hat{\beta}\|\delta_k+u_k\|_\infty.
\end{align*}
By \cite[Lemma 3]{jaakkola1994convergence}, $(\delta_{k+1}+u_{k+1})(\hat{x},\hat{u})$ tends to $0$ for $\|\delta_k+u_k\|_\infty>C\epsilon$. This shows that the condition $\|\delta_k+u_k\|_\infty>C\epsilon$ cannot be sustained indefinitely. Next, we show that once the process hits below $C\epsilon$ it always stays there. Suppose $\|\delta_k+u_k\|_\infty<C\epsilon$,
\begin{align*}
(\delta_{k+1}+u_{k+1})(\hat{x},\hat{u})&\leq(1-\alpha_k(\hat{x},\hat{u}))(\delta_k+u_k)(\hat{x},\hat{u})+\alpha_k(\hat{x},\hat{u}) \beta_h \left(\|\delta_k+u_k\|_\infty+\epsilon\right)\\
&\leq (1-\alpha_k(\hat{x},\hat{u})) C\epsilon + \alpha_k(\hat{x},\hat{u}) \beta_h (C\epsilon + \epsilon)\\
&= (1-\alpha_k(\hat{x},\hat{u})) C\epsilon  + \alpha_k(\hat{x},\hat{u}) \beta_h (C+1)\epsilon\\
&\leq  (1-\alpha_k(\hat{x},\hat{u})) C\epsilon  + \alpha_k(\hat{x},\hat{u}) C\epsilon, \quad (\beta_h(C+1)\leq C)\\
&=C\epsilon.
\end{align*}
Then, we can write $\|\delta_{k+1}+u_{k+1}\|_\infty<C\epsilon$. 

Thus, taking $\epsilon \to 0$, we can conclude that $\Delta_k=\delta_k+u_k+v_k \to 0$ almost surely.

Therefore, the process $Q_k$, determined by the algorithm converges almost surely to $Q_h^*$. 
\end{proof}

\section{Proof of Lemma \ref{kernel_x}}\label{kernel_x_proof}
We are interested in the distance between the distributions of the following random variables
\begin{align*}
&x(h)=x(0)+\int_0^{h}b(x(s),u)ds+\int_0^h\sigma(x(s),u)dB(s)\\
&y(h)=y(0)+\int_0^{h}b(y(s),u)ds+\int_0^h\sigma(y(s),u)dB(s).
\end{align*}
We define $z(h):=x(h)-y(s)$, whose dynamics are given by
\begin{align*}
z(h)=z(0)+\int_0^h \bar{b}(x(s),y(s),u)ds+\int_0^h \bar{\sigma}(x(s),y(s),u)dB(s)
\end{align*}
where 
\begin{align*}
\bar{b}(x(s),y(s),u)&:=b(x(s),u)-b(y(s),u)\\
\bar{\sigma}(x(s),y(s),u)&:=\sigma(x(s),u)-\sigma(y(s),u).
\end{align*}
Using the Ito formula 
\begin{align*}
E[Z(h)^2]&\leq Z_0^2+E\left[\int_0^h 2Z_t\bar{b}(x(s),y(s),u)ds \right]+ E\left[\int_0^h \bar{\sigma}^2(x(s),y(s),u)ds  \right]\\
&\leq  Z_0^2 +2K \int_0^hE\left[Z_s^2\right]ds+ K^2 \int_0^hE\left[Z_s^2\right]ds.
\end{align*}
We can then use Gronwall's inequality to write
\begin{align*}
E[Z(h)^2]\leq Z_0^2 e^{(2K+K^2)h}.
\end{align*}
Using the Holder's inequality, we can further write
\begin{align*}
E[|X(h)-Y(h)|]\leq |x(0)-y(0)|e^{(K+\frac{K^2}{2})h},
\end{align*}
which concludes the proof.

\section{Proof of Lemma \ref{kernel_u}}\label{kernel_u_proof}
We first define the following random variables
\begin{align*}
&x(h)=x(0)+\int_0^{h}b(x(s),u)ds+\int_0^h\sigma(x(s),u)dB(s)\\
&y(h)=x(0)+\int_0^{h}b(y(s),\hat{u})ds+\int_0^h\sigma(y(s),\hat{u})dB(s).
\end{align*}
By defining  $z(h):=x(h)-y(s)$, whose dynamics are given by
\begin{align*}
z(h)=\int_0^h \bar{b}(x(s),y(s),u,\hat{u})ds+\int_0^h \bar{\sigma}(x(s),y(s),u,\hat{u})dB(s)
\end{align*}
where 
\begin{align*}
\bar{b}(x(s),y(s),u,\hat{u})&:=b(x(s),u)-b(y(s),\hat{u})\\
\bar{\sigma}(x(s),y(s),u,\hat{u})&:=\sigma(x(s),u)-\sigma(y(s),\hat{u}).
\end{align*}
Using the Holder's inequality, the boundedness, and the continuity properties of $b,\sigma$, under the assumption that $h<1$, we can write
\begin{align*}
E\left[Z(h)^2\right]\leq& E\left[\int_0^h\left(KZ(s)+K|u-\hat{u}|\right)^2ds\right]+E\left[\left(\int_0^h\left(KZ(s)+K|u-\hat{u}|\right)dB(s)\right)^2\right]\\
&=2 E\left[\int_0^h\left(KZ(s)+K|u-\hat{u}|\right)^2ds\right]\leq 4K^2\int_0^hE\left[Z(s)^2\right]ds+4K^2 h|u-\hat{u}|^2
\end{align*}
under the assumption that $h<1$, and using the Gronwall inequality
\begin{align*}
E\left[Z(h)^2\right]\leq 4K^2 |u-\hat{u}|^2e^{4K^2h}.
\end{align*}
Using the Holder's inequality, we can conclude that
\begin{align*}
E\left[|X(h)-Y(h)|\right]\leq 2K e^{2K^2h}|u-\hat{u}|.
\end{align*}

\bibliographystyle{plain}

\bibliography{AliBibliography,references_acc,SerdarBibliography_acc,references}

\end{document}